\newcommand{\com}{\ifthenelse{\boolean{comm}}}
\newcommand{\sol}{\ifthenelse{\boolean{sol}}}
\newcommand{\note}{\ifthenelse{\boolean{notes}}}
\newtheorem{Def}{Definition}
\newtheorem{Prop}[Def]{Proposition}
\newtheorem{Th}[Def]{Theorem}
\newtheorem{Lem}[Def]{Lemma}
\newtheorem{Cor}[Def]{Corollary}
\theoremstyle{definition}
\newtheorem{Rem}[Def]{Remark}
\newcommand{\mC}{\ensuremath{\mathbb{C}}}					
\newcommand{\mN}{\ensuremath{\mathbb{N}}}
\newcommand{\mZ}{\ensuremath{\mathbb{Z}}}
\newcommand{\mK}{\ensuremath{\mathbb{K}}}
\newcommand{\mT}{\ensuremath{\mathbb{T}}}
\newcommand{\mB}{\ensuremath{\mathbb{B}}}
\newcommand{\mc}{\mathcal}								
\DeclareMathOperator{\Image}{Im}
\newcommand{\ee}{\mbox{$\varepsilon$}}
\renewcommand{\phi}{\varphi}
\newcommand{\sph}{\mathbb{S}}							
\newcommand{\fn}{\mathcal{C}}	   				  
\newcommand{\mb}{\begin{pmatrix}}					
\newcommand{\me}{\end{pmatrix}}						
\newcommand{\co}{\mc{K}(\mc{H})}						
\newcommand{\toe}{\mathcal{T}}						
\newcommand{\stoe}{\ensuremath{\mathcal{T}^\infty}}		
\DeclareMathOperator{\Ad}{Ad}							
\newcommand{\sco}{\ensuremath{\mathcal{K}}}					
\DeclareMathOperator{\Ker}{Ker}						
\DeclareMathOperator{\id}{id}							
\newcommand{\rrarrow}{\rightrightarrows}
\newcommand{\mathbbm}{\mathbb}
\DeclareMathOperator{\eval}{ev}
\newcommand{\derp}[2][]{\frac{\partial #1}{\partial #2}}
					\newcommandx*{\ModQHM}[2][1={},2={}]{{M^{#1}_{#2}}}
\newcommand{\N}{\mathbbm{N}}
							\newcommandx*\QHM[2][1={},2={}]{{D^{#1}_{#2}}}
							\newcommandx*\QHMl[2][1={},2={}]{{\mathscr{D}^{#1}_{#2}}}
\newcommand{\R}{\mathbbm{R}}
\newcommand{\Z}{\mathbbm{Z}}
\theoremstyle{definition} 
\DeclareMathOperator{\im}{Im}
\newcommand{\dog}{Q}
\renewcommand{\sco}{\ensuremath{\mathbb{K}^\infty}}
\begin{document}

\title{\Large{\MakeUppercase{Exact sequences for locally  convex subalgebras of Pimsner algebras with an application to Quantum~Heisenberg~Manifolds}}}
\author{\small{OLIVIER GABRIEL AND MARTIN GRENSING}}
\date{\small{4 October 2010}}
\maketitle
\thispagestyle{empty}
\begin{abstract} \small{We prove six-term exact sequences of Pimsner-Voiculescu type for certain subalgebras of the Cuntz-Pimsner algebras. This sequence may, in particular, be applied to smooth subalgebras of the Quantum Heisenberg Manifolds in order to compute the generators of their cyclic cohomology. Further, our results include the known results for smooth crossed products. Our proof is based on a combination of arguments from the setting of Cuntz-Pimsner algebras and the Toeplitz proof of Bott-periodicity.}
\end{abstract}

\section{Introduction} In classical bivariant $K$-theory, the Pimsner-Voiculescu-six-term-exact-sequence (PMV-sequence)  \cite{MR587369} is one of the main results, allowing to calculate the $K$-theory of crossed products in many  cases. While proved originally for monovariant $K$-theory, it can be proved more simply using $KK$-theory, see \cite{BlackK}. J. Cuntz (\cite{MR750677}) has given a proof that applies, at least partially, to all split exact, homotopy invariant stable functors on the category of $C^*$-algebras with values in the category of abelian groups. His proof is based on the notion of quasihomomorphism, a sort of linearized version of $*$-homomorphism. These techniques were later generalized to the setting of locally convex (or even bornological) algebras (\cite{CuntzMeyer}). The by now classical Toeplitz proof of Bott periodicity (\cite{MR750677}) may then be modified to yield a proof of the six-term exact sequence, and J. Cuntz gives in \cite{MR2240217} an analogous sequence involving smooth crossed products in the locally convex setting. 

Meanwhile, M. Pimsner presents in \cite{MR1426840} a simultaneous generalisation of crossed products and Cuntz-Krieger algebras, and proves a PMV-type sequence, using a certain universal algebra that is an extension of the algebra corresponding to the crossed products by a stabilisation of the base-algebra.

We will show that, under suitable conditions, one may construct such a sequence for locally convex subalgebras of the Pimsner algebras by considering certain subalgebras of the  tensor products of the Toeplitz algebra  already used for the construction of $kk$-theory in \cite{MR2240217}. One of the main results is then the identification of this extended algebra with the base algebra as formulated in Theorem \ref{PMV}. The proof is based on the techniques introduced by Cuntz in the Toeplitz proof of Bott periodicity in the smooth setting. The passage from crossed product to our setting naturally causes difficulties when considering the equivalence between the subalgebra generated by the canonical copy of the base algebra in its corresponding Toeplitz-algebra. It is here that we have to impose further size-restrictions on the algebras we start with.

Changing the stabilisations in the construction of $kk$, or rather by passing to other versions of the smooth Toeplitz extension defined by using Schatten-classes, has some influence on the growth conditions we impose in section \ref{Morita context}, but we will not go into this for the moment.

As an application, we construct six-term-exact-sequences for the Quantum Heisenberg Manifolds (QHM) in $kk$ and cyclic theory, that allow to completely determine the cyclic invariants of the QHM. The $C^*$-QHM can be considered as Pimsner-algebras, and therefore the long exact sequences are available. When dealing with cyclic theory, one has to stay in the smooth category. We show thus that the smooth QHM satisfy our growth conditions, and our techniques thus apply. Consequently, we get Theorem \ref{QHMDIAG} below as a concrete result. 

We leave the application to an explicit calculation of the cyclic periodic cohomology of the QHM to a future article (see \cite{OlPair}).

\section{Preliminaries}
We denote the completed projective tensor product by $\otimes_\pi$. By a locally convex algebra we will mean a complete locally convex vector space that is at the same time a topological algebra, \textsl{i.e.}, the multiplication is continuous. Hence if $A$ is a locally convex algebra, this means that for every continuous seminorm $p$ on $A$ there is a continuous seminorm $q$ on $A$ such that for all $a,b\in A$
$$p(ab)\leq q(a)q(b).$$

If $A$ is a locally convex algebra, we denote by $ZA$ the locally convex algebra of differentiable functions from $[0,1]$ all of whose derivatives vanish at the endpoints. We denote by $ev^A_t:ZA\to A$ the evaluations in $t\in I$.

\begin{Def} Let $\phi_0,\phi_1:A\to B$ be a homomorphism of locally convex algebras. A diffotopy is by definition a homomorphism $\Phi:A\to ZB$ such that $ev^B_i\circ\Phi=\phi_i$ for $i=1,2$.
\end{Def}

\begin{Def} The smooth compact operators are defined as those  compact operators $A\in\mathbb{B}(l^2{\mN})$ such that, if $(a_{i,j})$ is the representation of $A$ with respect to the standard basis, then for all $m,n\in\mN$:
$$\sup_{i,j\in\mN} (1+i)^n(1+j)^m|a_{i,j}|\leq \infty.$$
They are topologized by the increasing family of seminorms $||_{m,n}$ with
$$||A||_{m,n}:=\sum_{i,j} (1+i)^m(1+j)^n|a_{i,j}|.$$
\end{Def}
Note that the elements of $\sco\otimes_\pi B$ are just the matrices with rapidly decreasing coefficients in $B$ (see \cite{CuntzMeyer}, chapter 2, 3.4).
\begin{Def} A functor $H$ on the category of locally convex algebras with values in abelian groups is called 
\begin{itemize}
\item $\sco$-stable if the natural inclusion $\theta:\mc{A}\to \sco\otimes_\pi \mc{A},\; a\mapsto e\otimes a$ obtained by any minimal idempotent $e\in \sco$ induces an isomorphism
$$H(\theta):H(\mc{A})\to H(\sco\otimes_\pi \mc{A}),$$
\item diffotopy invariant, if $H(ev^{\mc{A}}_t)$ is independent of $t\in [0,1]$ for every locally convex algebra $\mc{A}$,
\item split exact if for every short exact sequence 
\[\xymatrix{0\ar[r]&\mc{A}\ar[r]&\mc{B}\ar[r]&\mc{C}\ar[r]&0}\]
of locally convex algebres admitting a split, the sequence
\[\xymatrix{0\ar[r]&H(\mc{A})\ar[r]&H(\mc{B})\ar[r]&H(\mc{C})\ar[r]&0}\]
is exact.
\end{itemize}
\end{Def}
\section{Pimsner Algebras} 
In this section, we give a short introduction to the construction presented by M. Pimsner in \cite{MR1426840}, and collect some necessary details. See also \cite{Katsura02} for generalized version (which we will only use later on). In the sequel, we will simply refer to a $U(1)$-action as a gauge-action.
 
For given right Hilbert module $E$ over a $C^*$-algebra $A$ we denote by $\mathbb{B}_A(E)$ the adjointable operators on $E$. Let $(E,\phi)$ be a \emph{$C^*$-auto-correspondence} of $A$. That is, $E$ is a Hilbert $A$-module, and $\phi:A\to\mathbb{B}_A(E)$ a representation of $A$ on $E$  by adjointable operators. We assume throughout that $\phi$ is injective and $E$ a full $A$-module, and we suppress $\phi$ whenever there is no risk of confusion. The full Fock-space of $E$ is defined as vector space \index{$\mathcal{F}_E$}$\mathcal{F}_E:=\bigoplus_{n=0}^\infty E^{\otimes n}$, where $E^{\otimes n}$ denotes the $n$-fold tensor product $E\otimes_A\cdots\otimes_A E$. There is a $U(1)$-action on $\mathcal{F}_E$ given by $\lambda.(\xi_1\otimes\cdots\otimes\xi_n):=\lambda^n\xi_1\otimes\cdots\otimes \xi_n$, which induces an $\mN$-grading; the projection onto the degree-zero subspace with respect to this grading is denoted $\dog$. The \emph{Pimsner-Toeplitz algebra} $\mc{T}_E$\index{$\mc{T}_E$} is defined as $C^*(T_\xi|\xi\in E)\subseteq \mB(\mc{F}_E)$. Here $T_\xi$\index{$T_\xi$} is the operator  
$$\mathcal{F}_E^n\owns\eta_1\otimes\cdots\otimes\eta_n\to\xi\otimes\eta_1\otimes\cdots\otimes\eta_n \in \mathcal{F}_E^{n+1}.$$
The gauge-action on $\mathcal{F}_E$ induces a gauge-action on $\mathbb{B}_A(E)$ by conjugation; $\mc{T}_E$ is invariant for this action. It thus inherits a $\mZ$-grading, and we denote the subspace of degree $n$ by $\mc{T}_E^n$.

Define the ideal \index{$J(E)$}$J(E)\subseteq \mB_A(\mc{F}_E)$ as the sub-$C^*$-algebra generated by the subalgebras 
$$\mB_A\left(\bigoplus_{n=0}^N\mc{F}^n_E\right),\; N\in\mN.$$
The \emph{(Cuntz--)Pimsner algebra} is by definition \index{$\mathcal{O}_E$}$\mathcal{O}_E:=\mc{T}_E/J(E)$ (more precisely, the image of $\mc{T}$ in the quotient $\mathbb{B}(\mc{F}_E)/J(E)$); the $U(1)$-action preserves the quotient, hence passes to a Gauge action on $\mc{O}_E$ and induces again an $\mZ$-grading. An element $x$ in $\mc{T}_E$ or $\mc{O}_E$ is of degree $n$ iff $\gamma_t(x)={e^{2\pi int}}x$. Note also that the maps $\xi\mapsto T_\xi $ and $S\mapsto S_\xi$ are isometric and linear, where $S$\index{$S$} is $T$ followed by the quotient map. 

We assume now that $E$ is an Hilbert $A$-bimodule with compatible scalar products ${}_A\langle\,\cdot\,|\,\cdot\,\rangle$ and $\langle\,\cdot\,|\,\cdot\,\rangle_A$:
$$\forall\;\zeta,\xi,\eta\in E:\;{}_A\langle\zeta|\xi\rangle \eta=\zeta\langle\xi|\eta\rangle_A.$$
Then note that on vectors of degree greater zero
$$T_\xi T_\zeta^*(\eta_1\otimes\cdots\otimes\eta_n)=\xi\otimes\langle \zeta|\eta_1\rangle_A \eta_2\otimes\cdots\otimes\eta_n={}_A\langle \xi|\eta\rangle\eta_1\otimes\cdots\otimes\eta_n,$$
and $T_\xi T_\zeta^*=0$ on vectors of degree zero. Hence $T_\xi T_\zeta^*=(1-\dog){}_A\langle\xi|\eta\rangle$. Thus in the Pimsner algebra, we have an honest equality $S_\xi S_\zeta^*={}_A\langle\xi|\eta\rangle$. Therefore, denoting the dual $E^*$ of $E$ by $E^{\otimes -1}$,  $\mc{O}_E$ identifies to the Hilbert-module $\oplus_{n\in\mZ} E^{\otimes_A n}$, equipped with the obvious product.

\section{Morita contexts and split exact functors}\label{contexts}
The following definition of Morita context is basically from \cite{MR2240217}.  We add some isomorphisms to the definitions in \cite{MR2240217} in order to make the existence of a Morita-context a weaker condition than being isomorphic.
\begin{Def}\label{contextcond}
Let $A$ and $B$ be locally convex algebras. Then a Morita-context from $A$ to $B$ is given by data $(\phi,D,\psi,\xi_i,\eta_i)$, where $D$ is a locally convex algebra, $\phi:A\to D$, $\psi:B\to D$ are isomorphisms onto subalgebras of $D$, and sequences $\eta_i$, $\xi_i$ in $D$ such that 
\begin{enumerate}
\item $\eta_i \phi(A)\xi_j\in\psi(B)$ for all $i,j$
\item $(\eta_i \phi(a)\xi_j)_{ij}\in\sco\otimes_\pi \psi(B)$
\item\label{convergencecontext} $\sum \xi_i\eta_i \phi(a)=\phi(a)$ for all $a\in A$ (convergence in $\phi(A)$).
\end{enumerate}
\end{Def}
With this definition:
\begin{itemize}
\item If $\phi:A\to B$ is an isomorphism, then we get a Morita context $(\phi^+,B^+,\cdot^+)$ from $A$ to $B$, where $B^+$ is the unitization of $B$, $b\mapsto b^+$ the canonical embedding, and $\phi^+=\phi\circ\cdot^+$
\item In particular, there is now a canonical Morita-context from $A$ to $A$, for any locally convex algebra $A$
\item If $B\subseteq \sco\otimes_\pi C$ is a subalgebra, and we call a corner in $B$ a subalgebra $A\subseteq B$ of the form $\sum_{i=0}^k e_{0i}B\sum_{i=0}^k e_{i0}$, then there is a trivial context from $A$ to $B$
\item If $B$ is row and column-stable ($e_{0i}B,Be_{i0}\subseteq B$ for all $i$), then there is a context from $B$ to $A$, where $B$ is a subalgebra of $\sco\otimes_\pi A$.
\end{itemize}
 
\begin{Def} Let $H$ be a $\sco$-stable functor, then we define $\theta:A\to \sco\otimes_\pi B,\,a\mapsto (\psi^{-1}(\eta_i\phi(a)\xi_j))_{ij}$ and $H(\phi,D,\psi,\xi_i,\eta_i):=H(\theta)$.
\end{Def}
Note that for $a,a'\in A$ we have 
\begin{align*}
(\psi^{-1}(\eta_i\phi(a)\xi_j))_{ij}(\psi^{-1}(\eta_k\phi(a')\xi_l))_{kl}=(\psi^{-1}(\eta_i\phi(a)\sum_{m}\xi_m\eta_m \phi(a')\xi_j))_{ij}
\end{align*}
which equals $\theta(aa')$ by \ref{convergencecontext} in the definition of a Morita context. Hence $\theta$ is indeed a homomorphism.
\begin{Def} A Morita-bicontext between locally convex algebras $A$ and $B$ is given by two Morita-contexts from $A$ to $B$ and $B$ to $A$ respectively, of the form $(\phi, D,\psi,\xi_i^A,\eta_i^A)$ and $(\psi,D,\phi,\xi_i^B,\eta_i^B)$ such that 
\begin{enumerate}
\item $\phi(A)\xi_i^A\xi_j^B\subseteq\phi(A)$, $\eta_i^B\eta_j^A\phi(A)\subseteq \phi(A)$ (left compatibility)
\item $\psi(B)\xi_i^B\xi_j^A\subseteq \psi(B)$, $\eta_i^A\eta_j^B\psi(B)\subseteq \psi(B)$ (right compatibility).
\end{enumerate}
\end{Def}
\begin{Th}\label{contweakeriso} Given two Morita contexts as in the above definition, and a diffotopy invariant, $\sco$-stable functor $H$, we have
$$H(\psi,D,\phi,\xi_i^B,\eta_i^B)\circ H(\phi, D,\psi,\xi_i^A,\eta_i^A)= \id_{H(A)}\text{ if they are left compatible }$$
$$ H(\phi, D,\psi,\xi_i^A,\eta_i^A)\circ H(\psi,D,\phi,\xi_i^B,\eta_i^B)=\id_{H(B)}\text{ if they are right compatible}$$
\end{Th}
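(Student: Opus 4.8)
The plan is to unwind the definition of the induced maps, reduce both desired identities to a single statement about a round-trip homomorphism, and then prove that statement by an explicit rotation diffotopy. By definition $H(\phi,D,\psi,\xi_i^A,\eta_i^A)=H(\theta^A)$, where $\theta^A:A\to\sco\otimes_\pi B$, $a\mapsto(\psi^{-1}(\eta_i^A\phi(a)\xi_j^A))_{ij}$; using $\sco$-stability to identify $H(\sco\otimes_\pi B)\cong H(B)$ via the canonical inclusion $\iota_0:b\mapsto e\otimes b$, I regard this as a map $H(A)\to H(B)$, and symmetrically for the $B\to A$ context. The first step is to record the \emph{functoriality} of this induced-map construction: for $f:A\to\sco\otimes_\pi B$ and $g:B\to\sco\otimes_\pi A$, the composite of the induced maps equals the induced map of $(\id_\sco\otimes g)\circ f:A\to\sco\otimes_\pi\sco\otimes_\pi A\cong\sco\otimes_\pi A$, the last identification being the standard isomorphism $\sco\otimes_\pi\sco\cong\sco$. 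This reduces the first claim to showing that the round-trip homomorphism $\Theta:=(\id_\sco\otimes\theta^B)\circ\theta^A$ induces the identity on $H(A)$.

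Second, I would compute $\Theta$ explicitly. Substituting the two context maps gives
$$\Theta(a)_{(i,k),(j,l)}=\phi^{-1}\bigl(\eta_k^B\,\eta_i^A\,\phi(a)\,\xi_j^A\,\xi_l^B\bigr),$$
and here the \emph{left compatibility} conditions $\eta_i^B\eta_j^A\phi(A)\subseteq\phi(A)$ and $\phi(A)\xi_i^A\xi_j^B\subseteq\phi(A)$ are exactly what guarantees that each entry lies in $\phi(A)$, so that $\Theta$ is a well-defined homomorphism into $\sco\otimes_\pi A$ after the identification $\sco\otimes_\pi\sco\cong\sco$; condition (ii) for both contexts supplies the rapid decay needed for the entries to assemble into an element of $\sco\otimes_\pi\phi(A)$.

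Third, and this is where the main work lies, I would show $H(\Theta)=H(\iota_0)$, i.e. that $\Theta$ is diffotopic to the standard corner inclusion $\iota_0:a\mapsto e\otimes a$, and then invoke diffotopy invariance. The mechanism is Cuntz's rotation trick: in $M_2(\sco\otimes_\pi A)$ one conjugates $\diag(\Theta(a),0)$ to $\diag(\iota_0(a),0)$ by a smooth path of unitaries $U_t$ built from the rotation matrices $\begin{pmatrix}\cos(\tfrac{\pi t}{2})&-\sin(\tfrac{\pi t}{2})\\ \sin(\tfrac{\pi t}{2})&\cos(\tfrac{\pi t}{2})\end{pmatrix}$ together with the ``isometry'' whose components are the vectors $\xi_i^A,\xi_i^B$. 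The point making the endpoints match is the convergence condition (iii), $\sum_i\xi_i\eta_i\phi(a)=\phi(a)$, for both contexts, which forces the relevant infinite sum to telescope to $\phi(a)$ in the corner. The second identity is proved identically, exchanging the roles of $A$ and $B$ and invoking right compatibility in place of left compatibility.

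The main obstacle I anticipate is this third step: one must verify that the conjugating path $U_t$ is genuinely a \emph{diffotopy}, namely that it is smooth in $t$ with all derivatives vanishing at the endpoints, that the infinite sums defining it converge in the locally convex topology (this is precisely where conditions (ii) and (iii) enter quantitatively, through the rapid-decay seminorms), and that the entire homotopy stays inside $\sco\otimes_\pi A$ rather than drifting into the larger algebra $D$, a containment again controlled by left compatibility. Arranging these convergence and smoothness estimates to hold simultaneously is the delicate part, and is presumably the source of the additional size restrictions alluded to in the introduction.
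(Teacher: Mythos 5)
Your first two steps coincide with the paper's own proof: it, too, multiplies by the stabilisation isomorphisms coming from $\sco$-stability, reduces both identities to the single claim that the round-trip homomorphism $(\id_{\sco}\otimes\theta^B)\circ\theta^A$ induces the same map under $H$ as the corner embedding, computes that composite entrywise exactly as your $\Theta$ (up to index ordering), and finishes with a cosine/sine rotation argument. So the skeleton of your plan is the paper's; the disagreement is confined to how the rotation is implemented in the third step --- and that is precisely where the content of the theorem sits.

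The gap is that in this setting your conjugating path $U_t$ does not exist, so the rotation cannot be realised as conjugation in $M_2(\sco\otimes_\pi A)$. The entries of the would-be isometry are the elements $\xi_i^A,\xi_i^B,\eta_i^A,\eta_i^B$, which live in $D$, not in $\sco\otimes_\pi A$, its unitization, or any multiplier algebra available in the locally convex category; worse, the sums $\sum_i\xi_i\eta_i$ that would occur in $U_tU_t^*$ and $U_t^*U_t$ need not converge in any topology, because condition \ref{convergencecontext} of Definition \ref{contextcond} controls only $\sum_i\xi_i\eta_i\phi(a)$, i.e.\ the sum \emph{after} multiplication by an element of $\phi(A)$. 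A Morita context is deliberately weaker than an isomorphism onto a corner: there is no isometry to rotate. The paper's proof is designed around exactly this point: instead of forming $U_t$, it adjoins one extra index $0$ to $\mN^2$ and deforms the bilinear data itself, setting $\hat\xi_0(t)=\cos(t)1$, $\hat\xi_{il}(t)=\sin(t)\xi_i^B\xi_l^A$, $\hat\eta_0(t)=\cos(t)1$, $\hat\eta_{kj}(t)=\sin(t)\eta_k^B\eta_j^A$, and takes the homotopy $a\mapsto\bigl(\phi^{-1}(\hat\eta_\alpha(t)\phi(a)\hat\xi_\beta(t))\bigr)_{\alpha,\beta}$, $\alpha,\beta\in\mN^2\cup\{0\}$. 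In this formulation every infinite expression keeps $\phi(a)$ sandwiched between context elements, so conditions (i)--(iii) together with left compatibility give well-definedness and rapid decay of the entries, multiplicativity at each fixed $t$ follows from $\cos^2+\sin^2=1$ and the telescoping identity $\sum_\beta\hat\xi_\beta(t)\hat\eta_\beta(t)\phi(a)=\phi(a)$, and the endpoints are the corner embedding (at $t=0$) and your $\Theta$. Your argument becomes the paper's once you rewrite $U_t\diag(\Theta(a),0)U_t^*$ in this sandwiched form; as literally stated, the convergence and containment problems you flag in your last paragraph are not removable technicalities but an actual obstruction to the unitary formulation.
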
\newcommand{\potimes}{\otimes_\pi}
\begin{proof}
The proof is an adaptation of the one from \cite{MR2240217} Lemma 7.2. Denote the isomorphism $H(A)\to H(\sco\potimes A)$ given by $\sco$-stability by $\ee_A$. Then more precisely, we have to show that
\begin{align*}
H(\phi,D,\psi,\xi_i^B,\eta_i^B)\circ \ee_B^{-1}\circ H(\phi, D,\psi,\xi_i^A,\eta_i^A)
\end{align*}
is invertible. We suppose left compatibility; then denoting $\theta^A$ and $\theta^B$ the maps $A\to \sco\potimes B$ and $B\to\sco\potimes A$ determined by the two contexts, and multiplying by $\ee_{\sco\potimes A}$  on the left, we see that it suffices to show that the composition $(\sco\otimes\theta_B)\circ\theta_A$ induces an invertible map under $H$. Now this is the map
\begin{equation}\label{compo}a\mapsto (\phi^{-1}(\eta_i^B(\eta_k^A\phi(a)\xi_l^A)\xi_j^B))_{iklj}\end{equation}
and it is diffotopic to the stabilisation as follows. The $L\times L$ matrix with entries $\phi^{-1}(\hat\eta_\alpha(t)\phi(a)\hat\xi_\beta(t))$, $\alpha,\beta\in \mN^2\cup\{0\}$ with
\begin{align*}
\hat\xi_0(t)=\cos(t) 1\hspace{2cm} \hat\xi_{il}(t)=\sin(t)\xi_i^B\xi_l^A\\
\hat\eta_0(t)=\cos(t) 1\hspace{2cm}\hat\eta_{kj}=\sin(t) \eta_k^B\eta_j^A
\end{align*}
yields a diffotopy of the map in equation \ref{compo}.
\end{proof}
\section{Quasihomomorphisms}
We collect some basic properties of quasihomomorphisms as introduced first in \cite{MR733641}, and  later adapted to the locally convex setting. See for example \cite{MR2207702} and \cite{MR2240217} for details.
\begin{Def} A quasihomomorphism between locally convex algebras $A$ and $B$ relative to $\hat B$, denoted $(\alpha,\bar\alpha):A\rrarrow \hat B\trianglerighteq B$, is given by a pair $(\alpha,\bar\alpha)$ of homomorphisms into an algebra $\hat B$ that contains $B$ such that $(\alpha-\bar\alpha)(A)\subseteq B$, $\alpha(A)B\subseteq B$ and $B\alpha(A)\subseteq B$. 
\end{Def}
Note that the definition is symmetric in $\alpha$ and $\bar\alpha$. Also, the definition can be generalized, basically by supposing that $\alpha,\bar\alpha$ have image in the multipliers of $B$, but we will not go into this. That quasihomomorphisms may be used as a tool to prove properties of $K$-theory (on the category of $C^*$-algebras) goes back   to \cite{MR750677}.
\begin{Lem}\label{quasiprops} Let $(\alpha,\bar\alpha):A\rrarrow \hat B\triangleright B$ be a quasihomomorphism, $H$ a split exact functor with values in the category of abelian groups. Then
\begin{enumerate}
\item $(\alpha,\bar\alpha)$ induces a morphism, denoted $H(\alpha,\bar\alpha)$, from $H(A)$ to $H(B)$,
\item for every morphism $\phi: D\to A$, $(\alpha\circ\phi,\bar\alpha\circ\phi)$ is a quasihomomorphism and:
$$H(\alpha,\bar\alpha)\circ H(\phi)=H(\alpha\circ\phi,\bar\alpha\circ\phi)$$
\item if $\phi:\hat B\to \hat C$ is a morphism and $C\trianglelefteq\hat C$ an ideal such that $(\phi\circ\alpha,\phi\circ\bar\alpha):A\rrarrow\hat C\trianglerighteq C$ is a quasihomomorphism, then $H(\phi\circ\alpha,\phi\circ\bar\alpha)=H(\phi)\circ H(\alpha,\bar\alpha)$,
\item \label{Pt:4} If $A$ is generated as a locally convex algebra by a set $X$, $B\trianglelefteq \hat B$ an ideal and $\alpha,\bar\alpha: A\to \hat B$ are two homomorphisms such that $(\alpha-\bar\alpha)(X)\subseteq B$, then $(\alpha,\bar\alpha)$ is a quasihomomorphism $A\rrarrow\hat B\trianglerighteq B$
\item if $\alpha-\bar\alpha$ is a morphism and orthogonal to $\bar\alpha$, then $H(\alpha,\bar\alpha)=H(\alpha-\bar\alpha)$
\item if $H$ is diffotopy invariant, $(\alpha,\bar\alpha):A\rrarrow Z\hat B\trianglerighteq ZB$  a quasihomomorphism, then the homomorphisms $H(ev_t\circ\alpha,ev_t\circ\bar\alpha)$ is independent of $t\in I$. 
\end{enumerate}
\end{Lem}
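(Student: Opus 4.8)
The plan is to attach to every quasihomomorphism one split extension and to obtain all six assertions from the naturality of the induced splitting. First I would replace $\hat B$ by the closed subalgebra generated by $\alpha(A)$ and $B$; inside it $B$ is a closed ideal (note $\bar\alpha(A)\subseteq\alpha(A)+B$, and the conditions $\alpha(A)B,B\alpha(A)\subseteq B$ together with $BB\subseteq B$ promote $B$ to a two-sided ideal). Writing $\pi\colon\hat B\to\hat B/B$ for the quotient, I form the fibre product
\[
P:=\{(x,y)\in\hat B\oplus\hat B : \pi(x)=\pi(y)\}.
\]
Because $(\alpha-\bar\alpha)(A)\subseteq B=\ker\pi$, the pair assembles into a single homomorphism $f:=(\alpha,\bar\alpha)\colon A\to P$. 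The second projection $p_2\colon P\to\hat B$ is onto with kernel the ideal $j(B)$, where $j\colon B\to P,\ b\mapsto(b,0)$, and $p_2$ is split by the diagonal $\Delta\colon x\mapsto(x,x)$. Split exactness of $H$ applied to $0\to B\xrightarrow{j}P\xrightarrow{p_2}\hat B\to0$ then produces a retraction $r\colon H(P)\to H(B)$ with $r\circ H(j)=\id$ and $r\circ H(\Delta)=0$, and I define $H(\alpha,\bar\alpha):=r\circ H(f)$; this settles (i).

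Assertions (ii), (iii) and (vi) are then functoriality statements. For (ii) the quasihomomorphism conditions for $(\alpha\phi,\bar\alpha\phi)$ descend from those for $(\alpha,\bar\alpha)$, the induced map into $P$ is exactly $f\circ\phi$, and $r\circ H(f\circ\phi)=(r\circ H(f))\circ H(\phi)$ gives the formula. For (iii), a morphism $\phi\colon\hat B\to\hat C$ carrying $B$ into $C$ induces $\phi\oplus\phi\colon P_B\to P_C$ compatibly with $j,\Delta,f$; since the direct-sum decomposition furnished by split exactness is itself natural for morphisms of split extensions, one gets $r_C\circ H(\phi\oplus\phi)=H(\phi|_B)\circ r_B$, and composing with $H(f)$ yields $H(\phi\alpha,\phi\bar\alpha)=H(\phi)\circ H(\alpha,\bar\alpha)$. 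Finally (vi) is the special case $\phi=ev_t\colon Z\hat B\to\hat B$ of (iii), which restricts to $ev_t\colon ZB\to B$: it gives $H(ev_t\circ\alpha,ev_t\circ\bar\alpha)=H(ev_t)\circ H(\alpha,\bar\alpha)$, and $H(ev_t)$ is independent of $t$ precisely by diffotopy invariance.

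The remaining two are more elementary. For (iv), since $B$ is an ideal in $\hat B$ the two multiplicativity conditions $\alpha(A)B\subseteq B$, $B\alpha(A)\subseteq B$ hold automatically, so only $(\alpha-\bar\alpha)(A)\subseteq B$ must be checked; writing $\delta:=\alpha-\bar\alpha$ and expanding $\delta(aa')=\bar\alpha(a)\delta(a')+\delta(a)\bar\alpha(a')+\delta(a)\delta(a')$ shows, again using that $B$ is an ideal, that $\{a:\delta(a)\in B\}$ is a subalgebra, which is closed by continuity of $\delta$ and hence contains the closed subalgebra generated by $X$, i.e. all of $A$. For (v), orthogonality of $\delta=\alpha-\bar\alpha$ to $\bar\alpha$ makes $\Delta\circ\bar\alpha$ and $j\circ\delta$ orthogonal homomorphisms $A\to P$ whose sum is $f$; additivity of $H$ on orthogonal homomorphisms (a standard consequence of split exactness) gives $H(f)=H(\Delta)\circ H(\bar\alpha)+H(j)\circ H(\delta)$, and applying $r$ annihilates the first term and returns $H(\delta)=H(\alpha-\bar\alpha)$.

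The step I expect to be the main obstacle is not any single identity but making the construction in (i) genuinely functorial: one must fix the retraction $r$ coming from split exactness so that it is natural for the induced maps $\phi\oplus\phi$ of fibre products, since (iii) — and through it the diffotopy invariance (vi) — rests entirely on this naturality. Special to the locally convex framework, two further points need care: the passage to the closed subalgebra in which $B$ becomes an ideal (so that the quotient $\hat B/B$ and the extension above are available), and the additivity of $H$ on orthogonal homomorphisms invoked in (v), which must be deduced from split exactness rather than taken for granted.
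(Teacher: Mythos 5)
Your proposal is correct and, in substance, coincides with the paper's own treatment: the paper proves only part (iv) in-house and explicitly delegates (i)--(iii), (v), (vi) to its references, where exactly the fibre-product (``double algebra'') construction you describe --- the split extension $0\to B\to P\to\hat B\to 0$ with retraction $r$ and the definition $H(\alpha,\bar\alpha)=r\circ H(f)$, naturality of $r$ for morphisms of split extensions, and additivity of split exact functors on orthogonal homomorphisms --- is carried out. For (iv) the paper uses the two-term telescoping identity $(\alpha-\bar\alpha)(xx')=(\alpha(x)-\bar\alpha(x))\alpha(x')+\bar\alpha(x)(\alpha(x')-\bar\alpha(x'))$ (modulo typos in the displayed formula) where you expand around $\bar\alpha$ with an extra cross term; both are equivalent, and the caveats you flag (closedness of $B$ for the continuity step, $\phi(B)\subseteq C$ in (iii), and that orthogonal additivity must be derived from split exactness) are genuine but standard and consistent with the paper's conventions.
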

\begin{proof}
To prove (iv), we let $A$ be generated by $X$, and $x,x'\in X$; then $$\alpha(xx')-\bar\alpha(xx')=(\alpha(x)-\bar\alpha(x'))\alpha(x')+\bar\alpha(x')(\alpha(x')-\bar\alpha(x')).$$ 
The proof of the other statements can be found in the references given above.
\end{proof}
For subsets $X$ and $Y$ in a locally convex algebra $B$, we say $Y$ is $X$-stable if $XY,YX\subseteq Y$.
\begin{Def} Let $X,Y\subseteq B$ be subsets. Then $XYX$ is defined as the smallest closed $X$-stable, hence locally convex, subalgebra of $B$ containing $Y$.
\end{Def}
$XYX$ is obviously independent of the size (but not the topology) of the ambient algebra: If $B\subseteq B'$ as a closed subalgebra and $X,Y\subseteq B$, then it doesn't matter if we intersect $X$-stable subalgebras of $B$ or $B'$. We denote by $X^+$ the set $X$ with a unit adjoint.
\begin{Lem}\label{triviallem} $XYX$ is the locally convex algebra generated by 
$$\left\{\sum_{finite}x_iy_ix_i'|x_i,x_i'\in X^+,\;y_i\in Y\right\}$$
and thus depends only on the locally convex algebras $LC(X)$ and $LC(Y)$ generated in $B$ by $Y$ and $X$, respectively:
$$XYX=X LC(Y)X=LC(X)YLC(X).$$
\end{Lem}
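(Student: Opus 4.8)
The statement packages two claims: an explicit generating set for $XYX$, and the ``size-independence'' asserting $XYX = X\,LC(Y)\,X = LC(X)\,Y\,LC(X)$. The natural strategy is to name the candidate set
$$S:=\left\{\sum_{finite}x_iy_ix_i'\mid x_i,x_i'\in X^+,\;y_i\in Y\right\}$$
and show that the closed subalgebra it generates coincides with $XYX$. I would argue by a double inclusion, establishing that $LC(S)$ is $X$-stable and contains $Y$ (so $XYX\subseteq LC(S)$ by minimality), and conversely that every element of $S$ already lies in any closed $X$-stable subalgebra containing $Y$ (so $LC(S)\subseteq XYX$).

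\textbf{Key steps.} First I would record that $Y\subseteq S$, since taking $x_i=x_i'=1\in X^+$ and a single summand $y_i\in Y$ exhibits each $y\in Y$ as an element of $S$; hence $Y\subseteq LC(S)$. Second, I would check $X$-stability of $S$ itself: for $x\in X$ and $\sum x_iy_ix_i'\in S$, the products $x\cdot\sum x_iy_ix_i'=\sum (xx_i)y_ix_i'$ and $\sum x_iy_ix_i'\cdot x=\sum x_iy_i(x_i'x)$ are again of the required form because $X^+$ is closed under multiplication by elements of $X$ (the adjoined unit only enlarges $X^+$, and $x\cdot 1=x\in X\subseteq X^+$). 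Passing to the generated locally convex algebra and using continuity of multiplication, $LC(S)$ is $X$-stable and closed, so by the defining minimality of $XYX$ we get $XYX\subseteq LC(S)$. For the reverse inclusion, I would observe that any closed $X$-stable subalgebra $W$ containing $Y$ must contain each generator $x_iy_ix_i'$ (apply $X$-stability on both sides, handling the $X^+$ case by noting that a $1$-factor simply omits a multiplication), hence contains all finite sums in $S$, hence contains $S$ and therefore $LC(S)\subseteq W$; applying this with $W=XYX$ gives $LC(S)\subseteq XYX$. Combining, $XYX=LC(S)$.

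\textbf{The size-independence corollary.} Once $XYX=LC(S)$ is in hand, the identities $XYX=X\,LC(Y)\,X=LC(X)\,Y\,LC(X)$ follow formally, and this is where I expect the only genuine subtlety. The point is that the generating set $S$ manifestly depends on $X$ and $Y$ only through the finite products $x_iy_ix_i'$, and by continuity and the algebra structure one may replace single letters by elements of the generated algebras without changing the closed algebra produced. Concretely, replacing $Y$ by $LC(Y)$ enlarges the generating set, but every new generator $\sum x_i z_i x_i'$ with $z_i\in LC(Y)$ is a limit of sums of products of elements of $Y$ sandwiched by $X^+$, hence already lies in $LC(S)$; symmetrically for replacing $X$ by $LC(X)$. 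Thus all three constructions have the same generating set up to closure and yield the same locally convex algebra.

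\textbf{Main obstacle.} The conceptual content is light, so the delicate part is bookkeeping rather than depth: one must be careful that the adjoined unit in $X^+$ is handled consistently (it is needed so that one-sided products and the element $Y$ itself appear, but it must not be allowed to smuggle in a unit for the ambient algebra), and that the replacement arguments for the size-independence respect the locally convex topology, i.e.\ use only continuity of multiplication and closedness rather than any norm estimate. I expect the verification that $LC(X)\,Y\,LC(X)$ does not strictly enlarge $LC(S)$ to be the step most in need of an explicit continuity argument, since it requires approximating products of elements of $LC(X)$ by polynomials in $X$ and commuting the limit past the multiplication.
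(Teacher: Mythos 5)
The paper gives no proof of this lemma at all (it is filed as trivial), so your argument has to stand on its own. Your overall strategy --- a double inclusion using the minimality in the definition of $XYX$ --- is the natural one, and the inclusion $LC(S)\subseteq XYX$ is correct as you present it: any closed $X$-stable subalgebra $W\supseteq Y$ contains each $x_iy_ix_i'$ by two applications of stability (the adjoined unit just omitting a multiplication), hence contains $S$ and therefore $LC(S)$.

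The gap is in the other inclusion, precisely at the assertion that ``$X^+$ is closed under multiplication by elements of $X$''. The paper defines $X^+$ as the \emph{set} $X$ with a unit adjoined, so for $x,x_i\in X$ the product $xx_i$ need not lie in $X^+$, the sum $\sum (xx_i)y_ix_i'$ is not of the required form, and $S$ (indeed $LC(S)$) need not be $X$-stable. Concretely, take $B$ an $\ell^1$-completion of the free nonunital algebra on $a,b$, $X=\{a\}$, $Y=\{b\}$: the closed algebra generated by $S=\{b,ab,ba,aba\}$ lies in the closed span of words whose prefix before the first $b$ has length at most one, so it misses $a^2b$, whereas $a^2b=a(ab)\in XYX$ by $X$-stability. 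So the first assertion of the lemma is literally false for subsets $X$ that are not multiplicatively closed, and your proof silently assumes such closure. The repair consistent with how the lemma is used in the paper is to reverse the logical order: first prove $XYX=X\,LC(Y)\,X=LC(X)\,Y\,LC(X)$ directly --- any closed $X$-stable subalgebra is automatically $LC(X)$-stable (induction on word length, then linearity, then closedness together with continuity of multiplication), and any closed subalgebra containing $Y$ contains $LC(Y)$ --- and then run your generating-set argument with $LC(X)^+$ in place of $X^+$, where the multiplicative closure you invoke does hold. In every application in the paper $X$ is the image of an algebra under a homomorphism, so your argument is valid there; but as a proof of the statement as written it needs either this restructuring or an explicit hypothesis that $X$ is closed under multiplication.
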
 
\begin{Def} For a pair of morphisms $\alpha,\bar\alpha:A\to \hat B$ we call the quasihomomorphism $(\alpha,\bar\alpha):A\rrarrow \hat B\trianglerighteq B$ with $B:=\alpha(A)(\alpha-\bar\alpha)(A)\alpha(A)$ the associated quasihomomorphism. We call a quasihomomorphism minimal, if it is associated to $\alpha,\bar\alpha:A\to \hat B$.
\end{Def}
By the above Lemma, if $X$ generates $A$, then $\alpha(X)(\alpha-\bar\alpha)(X)\alpha(X)=\alpha(A)(\alpha-\bar\alpha)(A)\alpha(A)$, using the notation of Lemma \ref{triviallem} above.
\section{The Smooth Toeplitz algebra and Extension}
We recall the definition and properties of the smooth Toeplitz algebra as introduced in \cite{MR1456322}, compare Satz 6.1 therein:
\begin{Def} The smooth Toeplitz algebra $\stoe$ is defined as the direct sum $\sco\oplus\fn^\infty(\sph^1)$ of locally convex vector spaces with multiplication induced from the inclusion into the $C^*$-Toeplitz algebra.
\end{Def}
It follows from \cite{MR1456322} that $\mc{T}^\infty$ is a nuclear locally convex algebra. We denote $S$ and $\bar S$ the generators of $\stoe$, set $e:=1-S\bar S$, and extract from the proof of Lemma 6.2 in \cite{MR1456322} the following fact:
\begin{Lem}\label{ahomotopy} There exists a unital diffotopy $\phi_t$ such that
$$ \phi_t(S)=S(1- e)\otimes 1+f(t)(e\otimes S)+g(t)Se\otimes 1,$$
where $f,g$ are smooth functions such that $f(0)=0$, $f(1)=1$, $g(0)=1$ and $g(1)=0$ and all derivatives of $f$ and $g$ vanish in $0,1$.
\end{Lem}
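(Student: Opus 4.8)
The plan is to produce an explicit homotopy inside the smooth tensor product $\stoe \otimes_\pi \stoe$ that moves the generator $S$ into a ``rotated'' position, exactly as in the $C^*$-Toeplitz proof of Bott periodicity, but keeping track of smoothness so that all derivatives vanish at the endpoints. First I would fix the decomposition $\stoe = \sco \oplus \fn^\infty(\sph^1)$ and recall that $S$, $\bar S$ are the image of the unilateral shift and its adjoint, with $e = 1 - S\bar S$ the rank-one projection onto the zero mode; the key algebraic relations are $\bar S S = 1$, $S\bar S = 1-e$, and $e$ is a minimal idempotent in $\sco$. The target formula
$$\phi_t(S) = S(1-e)\otimes 1 + f(t)\,(e\otimes S) + g(t)\, Se \otimes 1$$
is a prescription only on the generator $S$; so the real content is (a) checking that for each fixed $t$ this assignment extends to a genuine unital algebra homomorphism $\stoe \to \stoe \otimes_\pi \stoe$, and (b) verifying that as a function of $t$ it defines an element of $Z(\stoe\otimes_\pi\stoe)$, i.e. a diffotopy.

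For step (a) I would invoke the universal property of the (smooth) Toeplitz algebra: a unital homomorphism out of $\stoe$ is determined by the image $s$ of $S$ subject to the single defining relation that makes $\stoe$ the universal algebra on one generator with $1 - s\bar s$ ``smoothly compact''. Concretely one checks that $\phi_t(S)$ is a (power-bounded) element whose defect $1 - \phi_t(S)\phi_t(S)^*$ lands in $\sco \otimes_\pi \sco$ and that $\phi_t(\bar S):=\phi_t(S)^*$ together with $\phi_t(S)$ satisfy $\phi_t(\bar S)\phi_t(S)=1$. Using the orthogonality relations between the three summands $S(1-e)\otimes 1$, $e\otimes S$, and $Se\otimes 1$ — these have mutually orthogonal source and range projections once one computes with $e$, $1-e$, $S\bar S$ — the product $\phi_t(S)^*\phi_t(S)$ collapses to $f(t)^2 + g(t)^2 + (\text{the }S(1-e)\text{ part})$, so one must arrange $f(t)^2 + g(t)^2 \equiv 1$ for all $t$. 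This is precisely why $f,g$ are taken to be (up to the boundary-flattening) $\sin$ and $\cos$ of a reparametrised angle, and it is the computation one should present carefully rather than merely assert.

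For step (b), diffotopy, I would observe that $f,g$ are smooth with $f(0)=0,f(1)=1,g(0)=1,g(1)=0$ and all derivatives vanishing at $0,1$; hence $t\mapsto \phi_t(S)$ is a smooth path in $\stoe\otimes_\pi\stoe$ with all derivatives vanishing at the endpoints, so it assembles into a homomorphism $\Phi:\stoe \to Z(\stoe\otimes_\pi\stoe)$ with $ev_0\circ\Phi$ and $ev_1\circ\Phi$ the two endpoint homomorphisms. At $t=0$ the generator goes to $S(1-e)\otimes 1 + Se\otimes 1 = S\otimes 1$, the plain stabilisation-type inclusion; at $t=1$ it goes to $S(1-e)\otimes 1 + e\otimes S$, the ``Bott'' rotated embedding. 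The point to verify here is only that smoothness of the path and vanishing endpoint derivatives are compatible with the projective tensor topology and the seminorms on $\sco$, which follows since multiplication is jointly continuous and the coefficient functions are scalar.

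The main obstacle I expect is step (a): confirming that the three-term expression really does define a homomorphism, i.e. that the partial isometry/idempotent bookkeeping works out so that $\phi_t(S)$ has the correct defect relations in $\stoe\otimes_\pi\stoe$. One must be careful that $e\otimes S$ and $Se\otimes 1$ do not create cross terms when multiplied against $S(1-e)\otimes 1$; this rests on the identities $e\cdot S(1-e) = eS - eSe$ and $S\bar S = 1-e$, which need to be used to show the range projections are orthogonal and that $f^2+g^2=1$ suffices for isometry on the relevant subspace. Since the statement says this fact is \emph{extracted from} the proof of Lemma 6.2 in \cite{MR1456322}, the cleanest route is to cite that computation for the homomorphism property and the universal-property extension, and to record here only the explicit endpoint values and the smoothness/vanishing-derivative conditions that make $\phi_t$ a bona fide diffotopy.
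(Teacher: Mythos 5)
Your approach matches the paper's: the paper offers no proof of this lemma at all, merely extracting the statement from the proof of Lemma 6.2 in \cite{MR1456322}, and your reconstruction --- the rotation homotopy with mutually orthogonal summands, the observation that $\phi_t(\bar S)\phi_t(S)=1$ forces $f(t)^2+g(t)^2\equiv 1$ (a condition the lemma's statement omits but which is indeed part of the cited construction), and the endpoint identifications $\phi_0(S)=S\otimes 1$ and $\phi_1(S)=S(1-e)\otimes 1+e\otimes S$ --- is exactly the content of that cited proof. One side remark of yours is false: the defect $1-\phi_t(S)\phi_t(S)^*$ does \emph{not} lie in $\sco\otimes_\pi\sco$; already at $t=1$ it equals $Se\bar S\otimes 1+e\otimes e$, whose first summand carries a $1$ in the second tensor factor. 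This slip does not damage the argument, since Coburn-type universality needs only the isometry relation $\phi_t(\bar S)\phi_t(S)=1$; but it does mean that the genuinely delicate point --- that the induced map on the ideal $\sco$, sending $S^ie\bar S^j$ to $\phi_t(S)^i\bigl(1-\phi_t(S)\phi_t(S)^*\bigr)\phi_t(\bar S)^j$, is continuous for the rapid-decay seminorms and assembles into a smooth path with flat endpoints --- still has to be taken from \cite{MR1456322}, exactly as you (and the paper) ultimately propose.
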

Thus $\phi_0:\stoe \to \stoe\otimes_\pi\stoe,\,x\mapsto x\otimes 1$ is the canonical inclusion into the first variable, and $\phi_1$ is the map determined by $S\mapsto \phi_0(S^2\bar S)+(1-S\bar S)\otimes S$.

To unclutter notation, we denote $\phi_0(x):=\hat x$ and $e\otimes x=:\check x$ for all $x\in\stoe$. We also recall that that the smooth Toeplitz algebra fits into an extension 
$$\xymatrix{ 0\ar[r] &\sco\ar[r]& \stoe\ar[r]^-\pi& LC(U)\ar[r]&0}$$
where $LC(U)$ denotes the algebra $\fn^\infty(\mT)$ of smooth functions on the torus. The extension is linearly split by a continuous map $\rho:LC(U)\to \stoe$. The ideal is the  algebra $\sco$ of smooth compact operators, which is isomorphic (as a vector space) to $s\otimes s$, where $s$ is the space of rapidly decreasing sequences, and hence $\sco$ is nuclear. We caution the reader that the smooth compacts are not isomorphism-invariant if we view them as represented inside some $\mK$, where $\mK$ denotes the compacts on a separable, infinite dimensional Hilbert space. They are rapidly decreasing with respect to a \underline{choice} of base.

\section{A Morita-Context for locally convex subalgebras of $\mc{O}_E$}\label{Morita context}

We assume we are given a $C^*$-auto-correspondence $E$ over $A$, and a self-adjoint, locally convex subalgebra $D\subseteq\mathcal{O}_E$. Define $\mc{A}:=D\cap A$ -- considered as a locally convex algebra with the subspace topology. We define $$\mc{E}:=\{\xi\in E|S_\xi\in D\}=S^{-1}(D),$$ which is a linear subspace by linearity of $S$, a right module over the idealizer of $\mc{A}$ and has an $\mc{A}$-valued scalar product, as is easily checked. We assume throughout that $1_A\in \mc{A}$ and (for convenience), that $\mc{A}$ is complete.
\begin{Def}
\label{Def:ToepAlg}
We define $\stoe_D$, the Toeplitz-algebra of $D$, as the closed subalgebra of $D\otimes_\pi \stoe$ (completed projective tensor product) generated by 
$$\mc{A}\otimes 1,\;S(\mc{E})\otimes S,\;\bar S(\mc{E})\otimes \bar S,$$
and by $\iota:\mc{A}\to \stoe_D$ the canonical inclusion. We denote $\toe_D^0$ the algebra generated by the same generators, but without closure.
\end{Def}
\note{ If we don't assume that $\mc{A}$ is complete, we have to deal later on with a functor on the category of noncomplete locally convex algebras. The tensor-products have to be replaced everywhere by noncompleted tensor products, and the subalgebras are then just the algebraically generated ones. Much of what we do still works in this case.}{}

\begin{Def}\label{alpha} We define the algebra $C$ as the closed subalgebra in $D\otimes_\pi\stoe$ generated by $\mc{A}\otimes e$, $S(\mc{E})\otimes Se$ and $\bar S(\mc{E})\otimes e\bar S$; denote $\iota_C:\mc{A}\to C,\; a\mapsto a\otimes e$ the canonical inclusion.  Define $\alpha:=\id_{\stoe_D}$  and $\bar\alpha:=\Ad(1\otimes S)|_{\stoe_D}$. Then  $(\alpha,\bar\alpha):\stoe_D\rrarrow D\otimes_\pi \stoe \trianglerighteq C$ is a quasihomomosphism by design, in fact, the minimal quasihomomorphism associated to $\alpha,\bar\alpha$.
\end{Def}
\note{Note that $\bar\alpha$ is continuous, because multiplication in the locally convex algebra $D\otimes_\pi \mc{T}^s$ is continuous, hence $\Ad(1\otimes S)$ is continuous when restricted to the subalgebra $\stoe_D$.}{} Note that $(\alpha,\bar\alpha)$ is minimal because $C$ is $\alpha(\stoe_D)$-stable.

We denote $\pi_D:\stoe_D\to D\otimes_\pi LC(U)$ the restriction of the projection ($\otimes_\pi$ preserves surjectivity) defined as 
$$ \id_D\otimes \pi:D\otimes_\pi\stoe\to D\otimes_\pi LC(U).$$

In order to simplify upcoming calculations, we extend the definition of $S(\xi)=S_\xi$ as follows. If $I\in \mN^{n}$ is a multiindex, then we denote a tuple $(\xi_{I_1},\ldots,\xi_{I_n})$ of elements of $\mc{E}$ by $\xi_I$  and define $S_{\xi_I}:=S_{\xi_1}\cdots S_{\xi_n}$; we extend $\bar S$ in the same manner. We will also refer to $\xi_I$ as an $\mc{E}$-multivector, and write $|\xi_I|:=|I|$.

Using the relations in $D$ and Lemma \ref{triviallem}, we see that $C=\toe_D^0(1\otimes e)\toe_D^0$ is the locally convex algebra generated by $\sum x_{k,l}\otimes S^ke\bar S^l$, where $x_{k,l}$ is a finite sum of elements of the form $S_{\xi_i}\bar S_{\xi_j}$ with $|\xi_i|=k, |\xi_j|=l$ (and where we identify $\mc{A}$ with elements of degree zero in the obvious way). Thus $C\subseteq D\otimes_\pi \sco\trianglelefteq D\otimes_\pi \stoe$.

We continue to fix a locally convex subalgebra $D\subseteq \mc{O}_E$ such that $D\cap A$ is complete locally convex and unital. We want to analyse the exact sequence 
$$\xymatrix{0\ar[r] &\Ker(\pi_D)\ar[r]& \stoe_D\ar[r]^-{\pi_D}&\Image(\pi_D)\ar[r]&0,}$$
and its homological invariants. To start out, we treat $\stoe_D$.

\begin{Def} Let $\xi_i$ be a sequence of $\mc{E}$-multivectors. Then we define 
$$\Xi_i:=S_{\xi_i}\otimes S^{|\xi_i|}e \text{ and }\; \bar\Xi_i:= \bar S_{\xi_i}\otimes e{\bar S}^{|\xi_i|},$$
the which are elements in $D\otimes_\pi \sco$.
\end{Def}
\begin{Lem}\label{into} $\bar\Xi_i C\Xi_j\subseteq \mc{A}\otimes e$ for all $i,j\in\mN$; for $x=\sum_{k,l=1}^\infty x_{k,l}\otimes S^ke{\bar S}^l\in C$ 
$$p\otimes q(\bar\Xi_ix\Xi_j)= p(\bar S_{\xi_i} x_{|\xi_i|,|\xi_j|}S_{\xi_j}) q(e)$$
for every continuous seminorm $p$ on $\mc{A}$ and $q$ on $\sco$.
\end{Lem}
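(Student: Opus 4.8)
The plan is to push the whole computation into the second tensor leg, where it reduces to multiplication of matrix units, and only afterwards to determine where the surviving coefficient lives. First I would record that, writing $e_{k,l}$ for the standard matrix units of $\sco\subseteq\mc{K}(l^2(\mN))$, the vacuum projection is $e=e_{0,0}=1-S\bar S$ and $S^k e\bar S^l=e_{k,l}$, with $S$ and $\bar S$ the shift and its adjoint; equivalently this rests on the Toeplitz relations $\bar S S=1$, $eS=0=\bar S e$ and $e^2=e$. Consequently $\bar\Xi_i=\bar S_{\xi_i}\otimes e_{0,|\xi_i|}$ and $\Xi_j=S_{\xi_j}\otimes e_{|\xi_j|,0}$, while a general element of $C$ is, by the expansion established just before the lemma, $x=\sum_{k,l}x_{k,l}\otimes e_{k,l}$, convergent in $D\otimes_\pi\sco$ with each $x_{k,l}\in D$ a finite sum of terms $S_{\zeta}\bar S_{\zeta'}$, $|\zeta|=k$, $|\zeta'|=l$.

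Next, using continuity of multiplication in $D\otimes_\pi\stoe$, I would multiply the expansion term by term. On a single term the second leg contracts by the rule $e_{a,b}e_{c,d}=\delta_{b,c}e_{a,d}$:
$$e_{0,|\xi_i|}\,e_{k,l}\,e_{|\xi_j|,0}=\delta_{k,|\xi_i|}\,\delta_{l,|\xi_j|}\,e,$$
so the entire double sum collapses to the single surviving term and
$$\bar\Xi_i\,x\,\Xi_j=\bigl(\bar S_{\xi_i}\,x_{|\xi_i|,|\xi_j|}\,S_{\xi_j}\bigr)\otimes e.$$
The displayed seminorm identity then follows at once from $(p\otimes q)(a\otimes b)=p(a)q(b)$ applied to this elementary tensor.

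It remains to show $y:=\bar S_{\xi_i}\,x_{|\xi_i|,|\xi_j|}\,S_{\xi_j}\in\mc{A}=D\cap A$. Membership in $A$ is a gauge-degree count: $x_{|\xi_i|,|\xi_j|}$ has degree $|\xi_i|-|\xi_j|$, and conjugating by $\bar S_{\xi_i}$ and $S_{\xi_j}$ (degrees $-|\xi_i|$ and $|\xi_j|$) yields degree zero, so $y$ lies in the degree-zero part $A$ of the $\mZ$-graded algebra $\mc{O}_E=\bigoplus_{n\in\mZ}E^{\otimes n}$; concretely $\bar S_{\xi_i}S_{\zeta}$ and $\bar S_{\zeta'}S_{\xi_j}$ are iterated $A$-valued inner products of equal-length multivectors. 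For membership in $D$ I would argue that $\bar\Xi_i,\Xi_j\in D\otimes_\pi\sco$ and $x\in C\subseteq D\otimes_\pi\sco$, so their product $y\otimes e$ again lies in the algebra $D\otimes_\pi\sco$; applying the slice map $\id_D\otimes\omega$, where $\omega$ is the continuous functional on $\stoe$ extracting the $(0,0)$-matrix coefficient (continuous by the topological direct sum decomposition $\stoe=\sco\oplus\fn^\infty(\sph^1)$, and with $\omega(e)=1$), sends $y\otimes e$ to $y$ and maps $D\otimes_\pi\stoe$ into $D$. Hence $y\in D$, and together with $y\in A$ this gives $y\in\mc{A}$.

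The main obstacle is precisely this last containment $y\in D$: since $D$ is only a self-adjoint locally convex subalgebra of $\mc{O}_E$ and is not a priori stable under arbitrary slices, one must verify that the \emph{specific} product $y\otimes e$ genuinely lies in $D\otimes_\pi\sco$ before slicing, and produce a continuous functional on $\stoe$ detecting $e$. Everything else — the matrix-unit identities and the term-by-term multiplication, justified by continuity of the product and convergence of the expansion in $D\otimes_\pi\sco$ — is routine.
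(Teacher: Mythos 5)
Your proof is correct and follows essentially the same route as the paper: the identical matrix-unit contraction $e\bar S^{|\xi_i|}S^ke\bar S^lS^{|\xi_j|}e=\delta_{|\xi_i|,k}\delta_{l,|\xi_j|}e$ collapses the double sum to the single term $\bar S_{\xi_i}x_{|\xi_i|,|\xi_j|}S_{\xi_j}\otimes e$, and continuity of multiplication extends both the containment and the seminorm identity from finite sums to all of $C$. The only divergence is that where the paper observes the surviving coefficient is ``clearly'' in $\mc{A}\otimes e$ on finite sums and then closes up, you justify membership in $\mc{A}=D\cap A$ via a slice map $\id_D\otimes\omega$ and a gauge-degree count --- a more explicit rendering of the same fact.
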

\begin{proof}
Let $x\in  C$, and assume first that $x=\sum_{k,l} x_{k,l}\otimes S^ke \bar S$ is a finite sum. Then
\begin{align*}
\bar\Xi_ix\Xi_j=&\sum_{k,l} \bar S_{\xi_i}x_{k,l} S_{\xi_j}\otimes e{\bar S}^{|\xi_i|}S^ke{\bar S}^{l} S^{|\xi_j|}e\\
=&\sum_{k,l} \bar S_{\xi_i}x_{k,l} S_{\xi_j}\otimes \delta_{|\xi_i|,k}\delta_{l,|\xi_j|}e\\
=&\bar S_{\xi_i}x_{|\xi_i|,|\xi_j|} S_{\xi_j}\otimes e,
\end{align*}
and the latter is clearly an element of $\mc{A}\otimes e$. By continuity, the result extends to $C$.

For $x$ a finite sum the above yields the equality, and for $x\in C$ with representation $x=\lim_n (x_n=\sum_{k,l}x_{k,l}^n\otimes S^ke{\bar S}^l)$ as a limit of finite sums $x_n$ we obtain
\begin{align*}
p\otimes q(\bar\Xi_ix\Xi_j)=\lim_np\otimes q(\bar S_{\xi_i}x^n_{|\xi_i|,|\xi_j|}S_{\xi_j}\otimes e)=p\otimes q(\bar S_{\xi_i}x_{|\xi_i|,|\xi_j|}S_{\xi_j}\otimes e)
\end{align*}
from which the result follows.
\end{proof}

For simplicity, we assume now that $i\mapsto |\xi_i|$ is of the form $(1,\ldots,1,2,\ldots 2,3\ldots)$, where the same value is obtained on intervals of fixed length $l$, and that the products $S_{\xi_i}\bar S_{\xi_i}$ of same length sum up to one. We will call such sequences $S_{\xi_i},\bar S_{\xi_i}$ admissible.
\begin{Lem}\label{approx} If $S_{\xi_i}$, $\bar S_{\xi_i}$ are admissible and $p(S_{\xi_i}\bar S_{\xi_i})$ of polynomial growth in $i$ for all continuous seminorms $p$, then $(\sum_{i=1}^N \Xi_i\bar\Xi_i)_N$ is an approximate unit for $D\otimes_\pi\sco$.
\end{Lem}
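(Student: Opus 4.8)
The plan is to compute the partial sums $u_N:=\sum_{i=1}^N\Xi_i\bar\Xi_i$ explicitly and then let the rapid decay of $\sco$ defeat the assumed polynomial growth. First I would record that $e$ is the minimal projection $e_{00}$ of $\sco$ and $S^ne\bar S^n=e_{nn}$ is the diagonal matrix unit, so that a single multiplication gives
\[\Xi_i\bar\Xi_i=S_{\xi_i}\bar S_{\xi_i}\otimes e_{|\xi_i|,|\xi_i|}.\]
Grouping the indices into the admissible blocks (each of fixed length $l$ and constant multivector-degree $n$), the relation $\sum_{|\xi_i|=n}S_{\xi_i}\bar S_{\xi_i}=1_A$ collapses a complete block to $1_A\otimes e_{nn}$ (the empty multivector supplying the $e_{00}$ slot). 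Hence whenever $N$ is a block boundary, $u_N=1_A\otimes P_{m}$, where $P_m:=\sum_{n\le m}e_{nn}$ and $m$ is the highest block completely summed; for general $N$ we have the splitting $u_N=(1_A\otimes P_{m(N)})+R_N$ with remainder $R_N=c_N\otimes e_{m+1,m+1}$ and $c_N:=\sum S_{\xi_i}\bar S_{\xi_i}$ ranging over the at most $l$ indices of the partially filled block of degree $m+1$.

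The estimate underlying everything is that for any $w\in\sco$ and any exponent $c$, cutting to coordinates beyond $m$ costs a full power of $(1+m)$: factoring $(1+i)^{-c}\le(1+m)^{-c}$ out of the suppressed rows yields $\|(1-P_m)w\|_{a,b}\le(1+m)^{-c}\|w\|_{a+c,b}$ and likewise $\|e_{m+1,m+1}w\|_{a,b}\le(1+m+1)^{-c}\|w\|_{a+c,b}$. Using a representation $x=\sum_t v_t\otimes w_t$ of an element of $D\otimes_\pi\sco$ nearly realising the projective seminorm $r$ built from $p$ on $D$ and $\|\cdot\|_{a,b}$ on $\sco$, the bulk term is controlled by
\[r\bigl((1_A\otimes(1-P_m))x\bigr)\le(1+m)^{-c}\sum_t p(v_t)\,\|w_t\|_{a+c,b},\]
which tends to $0$ because the right-hand sum is (up to $\varepsilon$) a fixed higher seminorm of $x$. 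Since $1_A$ acts as a unit on all generators of $D$, this already shows $(1_A\otimes P_m)x\to x$, and symmetrically $x(1_A\otimes P_m)\to x$.

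The main obstacle is the remainder $R_N$, and this is exactly where the polynomial growth hypothesis enters. By continuity of multiplication in $D$ there is a seminorm $q$ with $p(c_Nv_t)\le q(c_N)q(v_t)$, and since the block length is the fixed number $l$, $q(c_N)\le l\max_i q(S_{\xi_i}\bar S_{\xi_i})$, which by hypothesis is bounded by a polynomial $(1+m)^{s}$ in $m$ (the partial-block indices satisfy $i\sim (m+1)l$). Combining this with the row estimate above gives
\[r(R_Nx)\le(1+m)^{s}(1+m+1)^{-c}\sum_t q(v_t)\,\|w_t\|_{a+c,b},\]
and choosing $c>s$ forces the scalar prefactor to $0$ while the sum remains a fixed seminorm of $x$; hence $R_Nx\to0$, and symmetrically $xR_N\to0$. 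In short, rapid decay beats polynomial growth.

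Putting the two paragraphs together yields $r(u_Nx-x)\to0$ and $r(xu_N-x)\to0$ for every $x\in D\otimes_\pi\sco$ and every continuous seminorm $r$, so $(u_N)_N$ is a two-sided approximate unit. The one point to verify with care is that, for the finitely many seminorms occurring in a given estimate, the projective representations $x=\sum_t v_t\otimes w_t$ can indeed be chosen with $\sum_t q(v_t)\|w_t\|_{a+c,b}$ finite and close to a fixed seminorm of $x$; this is precisely what the definition of the completed projective tensor product $\otimes_\pi$ provides.
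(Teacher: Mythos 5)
Your argument is correct and rests on the same two pillars as the paper's proof: exact cancellation of completed admissible blocks (each full block of degree $n$ contributes exactly $1_A\otimes e_{nn}$), and the fact that the rapid decay built into the seminorms of $\sco$ dominates the assumed polynomial growth of $p(S_{\xi_i}\bar S_{\xi_i})$. The bookkeeping, however, is genuinely different. The paper writes $x$ concretely as a rapidly decreasing matrix $\sum_{m,n}x_{m,n}\otimes S^me\bar S^n$ over $D$ and splits the matrix index set into a finite rectangle $F$ --- chosen so that both the plain tail and the tail weighted by the polynomial bound on the frame are below $\varepsilon$ --- and its complement; for $N\geq lM$ the partial sums act as the identity on the $F$-part and everything else is absorbed into the tails. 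You instead decompose the approximate unit itself, $u_N=1_A\otimes P_m+R_N$, so that the entire error is concentrated in the corner $1\otimes(1-P_m)$ and in the single row $e_{m+1,m+1}$ carrying the one partially filled block, and you kill both with the weight-shifting estimate $\Vert(1-P_m)w\Vert_{a,b}\leq(1+m)^{-c}\Vert w\Vert_{a+c,b}$. Your version buys an explicit rate $O\bigl((1+m)^{s-c}\bigr)$ and makes visible that the polynomial-growth hypothesis is needed only to control the one incomplete block; the paper's version stays inside the concrete matrix picture of $D\otimes_\pi\sco$ and never has to discuss near-optimal representations for the projective seminorm. Two small refinements: your estimates on $(1\otimes(1-P_m))x$ and on $R_Nx$ are cleanest if proved first on the algebraic tensor product (where the infimum definition of the projective seminorm applies verbatim) and then extended to the completion by continuity of the maps involved, rather than by appealing to infinite-sum representations of a general element; and you are right that the $e_{00}$ slot must be covered by a degree-zero term $1_A\otimes e$ --- the paper's own proof silently starts the sum at $i=0$ for exactly this reason.
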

\begin{proof} Let $x\in D\otimes_\pi\sco$, and present $x=\sum_{m,n=1}^\infty x_{m,n}\otimes S^me\bar S^{n}$ with rapidly decreasing coefficients  $x_{m,n}\in D$. Thus for fixed $N$
\begin{align*}
(\sum_{i=0}^N\Xi_i\bar\Xi_i)x=&\sum_{m,n=0}^\infty \sum_{i=0}^N S_{\xi_i}\bar S_{\xi_i}x_{m,n}\otimes S^{|\xi_i|}e\bar S^{|\xi_i|}S^me\bar S^n\\
=&\sum_{m,n=0}^\infty \sum_{i=0}^N (\delta_{|\xi_i|,m} S_{\xi_i}\bar S_{\xi_i})x_{m,n}\otimes S^me\bar S^n.
\end{align*}
Now let $\ee>0$, $p$ a continuous seminorm on $D$ and fix $s,t\in\mN$. Choose a finite subset $F=\mN_{\leq M}\times\mN_{\leq M}\subseteq \mN^2$ such that 
$$\sum_{(m,n)\in\mN^2\setminus F} p(x_{m,n})||S^m e\bar S^n||_{s,t}\leq \ee.$$ We may use the hypothesis to ensure that $F$ has the property that for all $F\subseteq F'$
$$\sum_{(m,n)\in \mN^2\setminus F'} l\sup_{i\leq m}\{p(S_{\xi_i}\bar S_{\xi_i})\} p(x_{m,n})||S^me\bar S^n||_{s,t}\leq \ee.$$
Let $N\geq lM$; then
\begin{align*}
&p\otimes||_{s,t}\left(\sum_{i=0}^NS_{\xi_i}\bar S_{\xi_i} x-x\right)\\
\leq &\sum_{(m,n)\in F}p\otimes||_{s,t}\left( \Big((\sum_{i:i\leq N,|\xi_i|=m}S_{\xi_i}\bar S_{\xi_i}) -1\Big)x_{(m,n)}\otimes S^me\bar S^n\right)\\
&+\sum_{(m,n)\in \mN^2\setminus F}p\otimes||_{s,t}\left( \Big((\sum_{i:i\leq N,|\xi_i|=m}S_{\xi_i}\bar S_{\xi_i}) -1\Big)x_{m,n}\otimes S^me\bar S^n\right)\\
\leq\;\;0\;&+\sum_{(m,n)\in\mN^2\setminus F}p\otimes||_{s,t}(x_{m,n}\otimes S^me\bar S^n)\\
&+\sum_{(m,n)\in\mN^2\setminus F} p\otimes ||_{s,t}(\sum_{i:i\leq N,|\xi_i|=m}S_{\xi_i}\bar S_{\xi_i} x_{m,n}\otimes S^me\bar S^n)\leq 2\ee.\\
\end{align*}
\end{proof}
\begin{Rem} This result is certainly not optimal, but sufficient for our applications. One may  suppose that the $S_{\xi_i}\bar S_{\xi_i}$ is just an approximate unit with an additional property (and thus not ordered by degree) on the appropriate sets, and a similar proof still goes through. Further, as the above condition is rather related to a lower bound for the size of the sequence $S_{\xi_i}$, it would be surprising if polynomial growth was strictly necessary at this point. After all, the $S_\xi\bar S_\xi$ are positive elements in the base-$C^*$-algebra that sum up to one.
\end{Rem}
\begin{Def} We call $D$ tame if it admits admissible sequences $(S_{\xi_i})_i$, $(\bar S_{\xi_i})_i$ of polynomial growth.
\end{Def}
\begin{Th}\label{context} If $D$ is tame, then there is a Morita bi-context from $\mc{A}$ to $C$.
\end{Th}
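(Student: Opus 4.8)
The plan is to write the bicontext down by hand, using as common ambient algebra $\mathscr{D}:=D\otimes_\pi\stoe$ together with the two embeddings $\phi:\mc{A}\to\mathscr{D},\ a\mapsto a\otimes e$ and $\psi:C\hookrightarrow\mathscr{D}$ the inclusion; note that $\phi$ is an isomorphism onto the corner $\mc{A}\otimes e=(1\otimes e)\,C\,(1\otimes e)$. The guiding observation is that $\mc{A}$ sits inside $C$ as the degree-$(0,0)$ corner, so the two required contexts are genuinely asymmetric: from $\mc{A}$ to $C$ one only needs the trivial corner inclusion, whereas from $C$ to $\mc{A}$ one must reassemble all of $C$ out of this corner, and it is precisely here that tameness enters.

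For the context from $C$ to $\mc{A}$ I would take $\xi_i:=\Xi_i$ and $\eta_i:=\bar\Xi_i$ coming from an admissible, polynomially bounded family (available since $D$ is tame), including the degree-zero term $\Xi_0=\bar\Xi_0=1\otimes e$ so that $\sum_i\Xi_i\bar\Xi_i$ reaches the corner. Condition (i) of Definition \ref{contextcond} is exactly Lemma \ref{into}, which moreover yields $\bar\Xi_i x\Xi_j=\bar S_{\xi_i}x_{|\xi_i|,|\xi_j|}S_{\xi_j}\otimes e$; thus the matrix of condition (ii) is $(\bar S_{\xi_i}x_{|\xi_i|,|\xi_j|}S_{\xi_j})_{ij}\otimes e$, and I would show it lies in $\sco\otimes_\pi(\mc{A}\otimes e)$ by playing the rapid decay of the coefficients $x_{m,n}$ of $x\in C\subseteq D\otimes_\pi\sco$ against the polynomial growth of the $S_{\xi_i}$: decay in the degree indices $|\xi_i|,|\xi_j|$ beats the polynomial growth within each degree block of fixed length $l$. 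Condition (iii), $\sum_i\Xi_i\bar\Xi_i x=x$, is Lemma \ref{approx}, together with the fact (checked on generators) that each $\Xi_i\bar\Xi_i x$ again lies in $C$, so the convergence takes place in $\psi(C)$.

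For the context from $\mc{A}$ to $C$ I would use the single element $\xi_0=\eta_0:=1\otimes e$: then $\eta_0\phi(a)\xi_0=a\otimes e\in C$, the associated matrix is the $1\times1$ matrix over $C$, and $\sum_i\xi_i\eta_i\phi(a)=a\otimes e=\phi(a)$, so (i)--(iii) are immediate. It then remains to verify the compatibility relations. Left compatibility reduces to $(\mc{A}\otimes e)(1\otimes e)\Xi_j$ and $\bar\Xi_i(1\otimes e)(\mc{A}\otimes e)$ being contained in $\mc{A}\otimes e$; using $eS=\bar Se=0$ these equal $\mc{A}\otimes e$ on the degree-zero term and vanish once $|\xi_j|,|\xi_i|\ge1$, hence lie in $\phi(\mc{A})$. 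Right compatibility reduces to $C\Xi_i\subseteq C$ and $\bar\Xi_i C\subseteq C$, which I would check on the generating elements $x_{k,l}\otimes S^ke\bar S^l$ of $C$ using the collapse $e\,\bar S^{\,p}S^{\,q}\,e=\delta_{p,q}e$ and the module relation $\bar S_\nu S_{\xi_i}=\langle\nu|\xi_i\rangle_A\in\mc{A}$ for matched degrees.

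The main obstacle is condition (ii) for the context from $C$ to $\mc{A}$, namely that the compressed matrix $(\bar S_{\xi_i}x_{|\xi_i|,|\xi_j|}S_{\xi_j})_{ij}$ be genuinely rapidly decreasing; this is the one step where the polynomial growth hypothesis cannot be dropped. It also explains why the reverse direction must be the trivial corner inclusion rather than a symmetric compression: the naive symmetric choice $\Xi_i(a\otimes e)\bar\Xi_j$ produces the Schatten-part $S^{|\xi_i|}e\bar S^{|\xi_j|}$, whose $\sco$-seminorms \emph{grow} with the degree, so the corresponding matrix is never rapidly decreasing. Once the decay estimate and Lemma \ref{approx} are in hand, the bicontext axioms follow from the short computations indicated above.
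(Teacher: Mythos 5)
Your proposal is correct and takes essentially the same route as the paper, whose proof is simply to combine Lemma \ref{into} and Lemma \ref{approx} for the context from $C$ to $\mc{A}$ and to use the constant sequence $1\otimes e$ as the inverse context from $\mc{A}$ to $C$. You additionally spell out details the paper leaves implicit -- the rapid-decay estimate for condition (ii), where tameness is used, and the compatibility checks via $e\bar S^{\,p}S^{\,q}e=\delta_{p,q}e$ -- all of which are consistent with the intended argument.
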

\begin{proof} 
This follows by combining Lemmas \ref{into} and \ref{approx} above, and using as the inverse context the constant sequences $1\otimes e$.
\end{proof}
Hence by Theorem \ref{contweakeriso} we have the following
\begin{Cor}\label{equivalenceI} If $H$ is a diffotopy invariant $\sco$-stable functor on the category of locally convex algebras, then $$H(C)\sim H(\mc{A}).$$
\end{Cor}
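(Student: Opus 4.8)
The plan is to read the statement as an immediate consequence of Theorem \ref{context} together with Theorem \ref{contweakeriso}, so that essentially all the work lies in checking that the hypotheses of the latter are met by the bicontext produced by the former, and in tracing through what ``$\sim$'' means homologically.

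First I would invoke Theorem \ref{context}: since $D$ is tame (the standing assumption carried over from that theorem), we are handed a Morita bicontext between $\mc{A}$ and $C$. Unwinding the proof of Theorem \ref{context}, this bicontext consists of the context from $C$ to $\mc{A}$ built from the admissible sequences $\Xi_i = S_{\xi_i}\otimes S^{|\xi_i|}e$ and $\bar\Xi_i = \bar S_{\xi_i}\otimes e\bar S^{|\xi_i|}$ -- where Lemma \ref{into} supplies conditions (i) and (ii) of a Morita context and Lemma \ref{approx} supplies the convergence condition (iii) -- paired with the trivial inverse context from $\mc{A}$ to $C$ given by the constant sequences $1\otimes e$, whose conditions hold because $e$ is idempotent. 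Both live inside the ambient algebra $D\otimes_\pi\sco$, with $\mc{A}$ embedded as $\mc{A}\otimes e$ and $C$ embedded by inclusion.

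Next I would record that, by the very definition of a Morita bicontext, this data satisfies both the left- and the right-compatibility conditions simultaneously. This is the point that lets me apply \emph{both} halves of Theorem \ref{contweakeriso} rather than only one: for a diffotopy-invariant, $\sco$-stable functor $H$, left compatibility gives that the composite of $H(\text{inverse context})$ with $H(\text{context})$ equals $\id_{H(\mc{A})}$, while right compatibility gives that the composite in the opposite order equals $\id_{H(C)}$. Here I am using that $\sco$-stability turns each $H$ of a context, a priori a map into $H(\sco\otimes_\pi(-))$, into an honest homomorphism $H(\mc{A})\to H(C)$ respectively $H(C)\to H(\mc{A})$, exactly as in the Definition preceding Theorem \ref{contweakeriso}.

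Combining the two identities shows the two induced maps are mutually inverse, hence each is an isomorphism of abelian groups, which is precisely $H(C)\sim H(\mc{A})$. I do not expect any genuine obstacle, since the entire analytic content -- the polynomial-growth estimate that makes the matrix $(\bar\Xi_i\,c\,\Xi_j)_{ij}$ land in $\sco\otimes_\pi(\mc{A}\otimes e)$ and the approximate-unit argument of Lemma \ref{approx} -- has already been discharged inside Theorem \ref{context}. The only thing requiring care is bookkeeping: orienting the two contexts so that their $H$-images compose in the two orders needed, and matching the left/right compatibility hypotheses of Theorem \ref{contweakeriso} to the correct composite.
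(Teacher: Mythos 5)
Your proposal is correct and follows exactly the paper's route: the paper derives this corollary by combining Theorem \ref{context} (tameness of $D$ yields the Morita bicontext between $\mc{A}$ and $C$, via Lemmas \ref{into} and \ref{approx} and the constant inverse context $1\otimes e$) with Theorem \ref{contweakeriso}. Your bookkeeping of the two directions, the compatibility conditions, and the role of $\sco$-stability in producing honest maps $H(\mc{A})\to H(C)$ and $H(C)\to H(\mc{A})$ matches the intended argument.
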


\section{Equivalence of the Toeplitz and base algebra}
We set $\Phi:\stoe_D\to Z(D\otimes \stoe\otimes\stoe)$ as the restriction of $\id_D\otimes (\phi_t)_t$, where $(\phi_t)_t:\stoe\to Z(\stoe\otimes_\pi\stoe)$ denotes the diffotopy from Lemma \ref{ahomotopy}. Further define $\Psi:\stoe_D\to Z( D\otimes_\pi\stoe\otimes_\pi\stoe)$ as the restriction of the map which is constant $\Ad(1\otimes \hat S)$, where $\Ad(1\otimes\hat S)(x):=(1\otimes \hat S)(x)(1\otimes\hat{\bar S})$ for all $x\in D\otimes_\pi\stoe\otimes_\pi\stoe$.

We let $(\Phi,\Psi):\stoe_D\rrarrow Z(D\otimes_\pi\stoe\otimes_\pi\stoe)\trianglerighteq C'$ denote the minimal quasihomomorphism associated to $\Phi,\Psi$, and $C'_t:=ev_t(C')$ the fibre over $t\in [0,1]$ of $C'$. We have the following 
\begin{Lem} $C'_t$ is the closed algebra generated by 
$$S_\xi\otimes (f(t)\check S+g(t) \hat S\hat e),a\otimes \hat e,\bar S_\xi\otimes (f(t)\check{\bar S}+g(t)\hat e \hat{\bar S}),\;\;\; \xi\in\mc{E},a\in\mc{A}.$$
\end{Lem}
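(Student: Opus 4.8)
The plan is to extract $C'_t$ from the general shape of the ideal attached to a minimal quasihomomorphism. Since $(\Phi,\Psi)$ is by construction the \emph{associated} quasihomomorphism, $C'=\Phi(\stoe_D)\,(\Phi-\Psi)(\stoe_D)\,\Phi(\stoe_D)$, i.e. the smallest closed $\Phi(\stoe_D)$-stable subalgebra of $Z(D\otimes_\pi\stoe\otimes_\pi\stoe)$ containing $(\Phi-\Psi)(\stoe_D)$. By the remark following Lemma \ref{triviallem}, and because $\stoe_D$ is generated by $X=\{a\otimes 1,\,S_\xi\otimes S,\,\bar S_\xi\otimes\bar S:a\in\mc{A},\xi\in\mc{E}\}$, this equals $\Phi(X)(\Phi-\Psi)(X)\Phi(X)$. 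As $ev_t$ is a continuous homomorphism, $C'_t=ev_t(C')$ is generated, as a closed algebra, by the products $ev_t\Phi(x)\cdot ev_t(\Phi-\Psi)(y)\cdot ev_t\Phi(x')$ with $x,x',y\in X$. So the whole statement reduces to two points: (a) compute $ev_t(\Phi-\Psi)$ and $ev_t\Phi$ on $X$, and (b) show that the closed algebra $G$ generated by the three elements listed in the statement is stable under left and right multiplication by the generators $ev_t\Phi(X)$; once $G$ is $ev_t\Phi(\stoe_D)$-stable and contains $ev_t(\Phi-\Psi)(X)$, the displayed products all land in $G$, giving $C'_t=G$ (the reverse inclusion being immediate, since the three generators lie in $C'_t$).

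For (a) I would use Lemma \ref{ahomotopy} for $\Phi$ and the description $\Psi=\Ad(1\otimes\hat S)\circ\widehat{(\cdot)}$ for $\Psi$. On $S_\xi\otimes S$ one gets $\Phi(S_\xi\otimes S)=S_\xi\otimes\bigl(S(1-e)\otimes 1+f(t)\,e\otimes S+g(t)\,Se\otimes 1\bigr)$ directly from $\phi_t(S)$, while $\Psi(S_\xi\otimes S)=S_\xi\otimes S^2\bar S\otimes 1=S_\xi\otimes S(1-e)\otimes 1$ after $S^2\bar S=S(1-e)$. The common summand $S(1-e)\otimes 1$ cancels, leaving $ev_t(\Phi-\Psi)(S_\xi\otimes S)=S_\xi\otimes(f(t)\check S+g(t)\hat S\hat e)$; the same computation for $\bar S_\xi\otimes\bar S$ produces the barred generator, and for $a\otimes 1$ one finds the constant $a\otimes\hat e$ (here $\widehat{a\otimes 1}=a\otimes1\otimes1$ is conjugated to $a\otimes S\bar S\otimes1=a\otimes(1-e)\otimes1$). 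Thus the three elements in the statement are exactly $ev_t(\Phi-\Psi)(X)$, and the multiplying generators are $T_\xi:=ev_t\Phi(S_\xi\otimes S)$, $\bar T_\xi$, and $P_a:=ev_t\Phi(a\otimes1)=a\otimes1\otimes1$.

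The technical core, and the main obstacle, is point (b): the stability of $G$. The subtlety is precisely that $T_\xi$ carries the extra summand $S(1-e)\otimes 1$, which is absent from the $(\Phi-\Psi)$-generators, so multiplying $G$ by $T_\xi$ threatens to create genuinely new higher "pure-creation" terms such as $S_\xi\otimes S^ne\otimes1$ with $|\xi|=n$, and one must verify these never leave $G$. I would organise the verification by the $\mN$-grading, in which the degree of the $D$-leg matches the total $S/\bar S$-degree of the two $\stoe$-legs, reducing every product to multiplications inside the finite semigroup $\{e,Se,e\bar S,Se\bar S,0\}$ of leg-parts, governed by $eS=0$, $(1-e)S=S$, $\bar S e=0$, $S\bar S=1-e$ and $e\stoe e=\mC e$. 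The mechanism is that the agreement-part $S(1-e)\otimes1$ in $T_\xi$ is controlled against the $e$-factor present in every generator of $G$: in the annihilation direction it is killed (via $(1-e)e=0$, $eS=0$), while in the creation direction $(1-e)S=S$ makes it build up exactly the higher-degree generators already available in $G$. Checking that each $T_\xi\cdot(\text{generator})$, $(\text{generator})\cdot T_\xi$, and the barred and $P_a$-analogues reduce in this way — and in particular that no term with two consecutive creation operators separated by $e$ (which the semigroup forbids) can survive — is where the argument must be carried out with care; this is the step I expect to be the hardest.
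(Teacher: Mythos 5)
Your setup is correct and matches the paper's: $C'=\Phi(\stoe_D)(\Phi-\Psi)(\stoe_D)\Phi(\stoe_D)$, the reduction to generators via Lemma \ref{triviallem}, and the computation that $ev_t(\Phi-\Psi)$ sends the generators $a\otimes 1$, $S_\xi\otimes S$, $\bar S_\xi\otimes\bar S$ exactly to the three listed elements (the cancellation of the common summand $S(1-e)\otimes 1=\hat S^2\hat{\bar S}$ is right). One remark on efficiency: since the algebra $G$ generated by these three families already contains $(\Phi-\Psi)(X)$ and is closed under multiplication, $\Phi(x)g=\Psi(x)g+(\Phi-\Psi)(x)g$ shows that $\Phi(\stoe_D)$-stability of $G$ is equivalent to $\Psi(\stoe_D)$-stability, so you should test against $\Psi(X)=\{S_{\xi'}\otimes\hat S^2\hat{\bar S},\,a'\otimes\hat S\hat{\bar S},\,\bar S_{\xi'}\otimes\hat S\hat{\bar S}^2\}$ rather than against $\Phi(X)$; every element of $\Psi(X)$ has a left factor $1\otimes\hat S$ and a right factor $1\otimes\hat{\bar S}$, so almost all products with the $G$-generators vanish outright from $\hat e\hat S=0$, $\check S\hat S=0$, $\hat{\bar S}\hat e=0$, etc. This is how the paper proceeds, and it removes most of the "semigroup bookkeeping" you anticipate.

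The genuine gap is that you never execute the stability check, and your sketch of it does not isolate the one product where something nonzero and not visibly in $G$ appears, namely
\begin{align*}
\bigl(\bar S_\xi\otimes(f(t)\check{\bar S}+g(t)\hat e\hat{\bar S})\bigr)\cdot\Psi(a'\otimes 1)
=\bar S_\xi a'\otimes g(t)\hat e\hat{\bar S}
\end{align*}
(and its analogue with $\bar S_{\xi'}\otimes\hat S\hat{\bar S}^2$ on the right). The term $\bar S_\xi a'\otimes g(t)\hat e\hat{\bar S}$ is not a product of the listed generators in any obvious way, and your heuristic that the agreement part ``builds up exactly the higher-degree generators already available in $G$'' does not apply: products of the listed generators only ever produce the \emph{combinations} $f\check{\bar S}+g\hat e\hat{\bar S}$ and their powers, not the isolated $g$-component. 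The missing idea is to first split the generators inside $G$: since $\hat e\,(f\check S+g\hat S\hat e)=f\check S$ and $(f\check{\bar S}+g\hat e\hat{\bar S})\hat e=f\check{\bar S}$, multiplying a generator by $1\otimes\hat e$ (which lies in $G$ because $1_A\in\mc{A}$) shows $S_\xi\otimes f\check S$ and $\bar S_\xi\otimes f\check{\bar S}$, hence also $S_\xi\otimes g\hat S\hat e$ and $\bar S_\xi\otimes g\hat e\hat{\bar S}$, belong to $G$ individually; the troublesome product is then $\bar S_{\xi''}\otimes g\hat e\hat{\bar S}$ for $\xi''=\phi(a'^*)\xi\in\mc{E}$ (using that $D$ is self-adjoint), hence in $G$. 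Without this unitality argument the verification does not close, so as written the proof is incomplete at precisely the step the paper's proof is designed to handle.
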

\begin{proof}
Using Lemma \ref{triviallem}, it suffices to check that $(\Phi-\Psi)(\stoe_D)$ is $\Psi(\stoe_D)$-stable, and it suffices to do so on the generators. We have got the product of sets:
\begin{align*}\{S_\xi\otimes(f(t)\check S+g(t)\hat S\hat e),a\otimes\hat e\}\{S_{\xi'}\otimes \hat S^2\hat{\bar S},a'\otimes \hat S\hat{\bar S},\bar S_{\xi'}\otimes \hat S{\hat{\bar S}}^2\}=0
\end{align*}
because, in $D\otimes_\pi\stoe\otimes_\pi\stoe$ we may factor the right hand set by a $1\otimes\hat S$ on the left, which is orthogonal to the left factor. Further:
\begin{align*}\bar S_{\xi}\otimes (f(t)\check{\bar S}+g(t)\hat e\hat{\bar S})\cdot S_{\xi}\otimes \hat S^2\hat{\bar S}=0
\end{align*}
by factoring out $1\otimes\hat S^2$. But
\begin{align}\label{whatever}
(\bar S_{\xi}\otimes (f(t)\check{\bar S}+g(t)\hat e\hat{\bar S}))\cdot(a'\otimes \hat S\hat{\bar S})=\bar S_{\xi}a'\otimes g(t)\hat e\hat{\bar S}.
\end{align}
However, 
$$(a\otimes\hat e)(S_\xi\otimes(f(t)\check S+g(t)\hat S\hat e))=aS_{\xi}\otimes f(t)\check S$$
lies in the algebra, thus so does the result in equation \ref{whatever}, because $\mc{A}$ is unital. $\Psi(\stoe_D)$-left stability follows similarly.

\end{proof}
The fibre $C_t$ is thus not constant. It's size over intermediate points is larger than in the endpoints. We denote $\iota_1$ the inclusion of $\stoe_D\to C'_1$ induced by $\stoe\to \stoe\otimes\stoe,\; x\mapsto \check x$.
\begin{Def} We denote $\bar C$ the closed subalgebra generated by the constants $a\otimes k\otimes 1,S_\xi\otimes k\otimes S, \bar S_\xi\otimes k\otimes \bar S$ in $Z(D\otimes_\pi\stoe\otimes_\pi\stoe)$.
\end{Def}
\begin{Th}\label{Theothercontext}
If $D$ is tame, then there is a Morita contex $\Xi_i'$, $\bar \Xi_i'$ from $C'$ to $\bar C\subseteq Z(D\otimes_\pi\stoe\otimes_\pi\stoe)$ that coincides with the context $\Xi$, $\bar\Xi$ in the fibre over zero.
\end{Th}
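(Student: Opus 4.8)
The plan is to transport the context $(\Xi_i,\bar\Xi_i)$ of Theorem~\ref{context} one tensor-slot upward along the diffotopy $\phi_t$ of Lemma~\ref{ahomotopy}, and then to re-run the two computations behind Theorem~\ref{context}, namely Lemmas~\ref{into} and~\ref{approx}, inside the cylinder $Z(D\otimes_\pi\stoe\otimes_\pi\stoe)$. Concretely, for an admissible sequence of $\mc{E}$-multivectors $\xi_i$ I would set
$$\Xi_i':=(\id_D\otimes\phi_t)(\Xi_i),\qquad \bar\Xi_i':=(\id_D\otimes\phi_t)(\bar\Xi_i),$$
regarded as elements of $Z(D\otimes_\pi\stoe\otimes_\pi\stoe)$. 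Since $\phi_0=\hat{\,\cdot\,}$ is the inclusion into the first factor, at $t=0$ these collapse to $\Xi_i\otimes 1$ and $\bar\Xi_i\otimes 1$, which under the identification $C'_0\cong C$ is exactly the pair $(\Xi_i,\bar\Xi_i)$; this gives the required coincidence in the fibre over zero by construction.

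It then remains to verify the three conditions of Definition~\ref{contextcond} for $(\Xi_i',\bar\Xi_i')$ viewed as a context from $C'$ to $\bar C$. For conditions (i) and (ii) I would imitate Lemma~\ref{into}: for $x\in C'$ the product $\bar\Xi_i' x\Xi_j'$ is computed by expanding $x$ on the generators of $C'_t$ and using that $\id_D\otimes\phi_t$ is a homomorphism, so that the orthogonality relations $eS=0$, $\bar Se=0$ — which drive the degree-counting $\delta$'s in Lemma~\ref{into} — are carried into the two Toeplitz slots. As there, only the single bidegree-$(|\xi_i|,|\xi_j|)$ component survives, with $D$-entry $\bar S_{\xi_i}x_{|\xi_i|,|\xi_j|}S_{\xi_j}\in\mc{A}$; granting the cancellation discussed below, this places $\bar\Xi_i' x\Xi_j'$ in $\bar C$ and makes $(\bar\Xi_i' x\Xi_j')_{ij}$ rapidly decreasing, i.e. an element of $\sco\otimes_\pi\bar C$, yielding (i) and (ii). For condition (iii) I would imitate Lemma~\ref{approx}: admissibility gives $\sum_{|\xi_i|=m}S_{\xi_i}\bar S_{\xi_i}=1$ in each fixed degree, and tameness, i.e. polynomial growth of $p(S_{\xi_i}\bar S_{\xi_i})$, controls the tail, yielding $\sum_i\Xi_i'\bar\Xi_i'x\to x$ for $x\in C'$. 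The only new point here is that the estimate must hold for the seminorms of the cylinder, i.e. uniformly in $t$ together with all $t$-derivatives; since $f,g$ and their derivatives are bounded on $[0,1]$ and the relevant sums are finite in each fixed degree, the polynomial bound is uniform in $t$ and the argument of Lemma~\ref{approx} survives taking the supremum over $t$ and differentiating under the sum.

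The genuinely new obstacle is condition (i): showing that the $t$-dependent conjugation actually lands in the \emph{constant} algebra $\bar C$. In Lemma~\ref{into} neither $C$ nor the context carried a parameter, whereas here $C'$ and $\Xi_i',\bar\Xi_i'$ all involve $f,g$, so a priori $\bar\Xi_i' x\Xi_j'$ is a nonconstant section of the cylinder. The crux is therefore a cancellation: one must check, using the explicit form of $\phi_t(e)$ together with the matrix-unit relations in the middle slot, that the cross terms in $f$ and $g$ vanish and only a $t$-independent matrix unit survives, so that the product lands in $\bar C$. This is exactly what pins down the correct definition of $\bar C$ as the right stabilised copy in the middle variable and, should the naive $\phi_t(\Xi_i)$ fail to be constant after conjugation, dictates the precise $f,g$-weighting of $\Xi_i',\bar\Xi_i'$. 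Once this cancellation is in place, conditions (ii) and (iii) are routine, if technical, adaptations of Lemmas~\ref{into} and~\ref{approx} powered by tameness.
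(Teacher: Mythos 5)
Your instinct to locate the whole difficulty in condition (i) of Definition \ref{contextcond} is correct, but the context you propose does not satisfy it, and the hedge at the end (``should the naive $\phi_t(\Xi_i)$ fail to be constant after conjugation, [this] dictates the precise $f,g$-weighting'') is exactly where the proof stops short: no such weighting is exhibited, and in fact the transported elements $\Xi_i':=(\id_D\otimes\phi_t)(\Xi_i)$ demonstrably fail to conjugate $C'$ into the constant algebra $\bar C$. To see this concretely, take the constant generator $a\otimes\hat e=a\otimes e\otimes 1$ of $C'$. Writing $\phi_t(e)=1-\phi_t(S)\phi_t(\bar S)=e\otimes e+g^2\,e\otimes(1-e)+f^2\,Se\bar S\otimes 1-fg\,(e\bar S\otimes S+Se\otimes\bar S)$, one computes
$$\phi_t(e)(e\otimes 1)\phi_t(e)=e\otimes e+g^4\,e\otimes(1-e)-fg^3\,(e\bar S\otimes S+Se\otimes\bar S)+f^2g^2\,Se\bar S\otimes 1,$$
which interpolates between $e\otimes 1$ at $t=0$ and $e\otimes e$ at $t=1$ and is therefore not a constant section; for multivectors of length $m\geq 1$ the situation is worse, since $\phi_t(\bar S)^m(e\otimes 1)=f(t)^m\,e\otimes\bar S^m$ forces an overall factor $f(t)^{|\xi_i|+|\xi_j|}$ onto $\bar\Xi_i'\,x\,\Xi_j'$. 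So no cancellation of the $f,g$-cross terms occurs, condition (i) fails, and with it the very purpose of the theorem: $\sigma$ must take values in a \emph{fixed} ($t$-independent) algebra for the homotopy argument $H(\bar\Phi_0,\bar\Psi_0)=H(\bar\Phi_1,\bar\Psi_1)$ in Theorem \ref{PMV} to go through.

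The paper makes the opposite choice: it takes the \emph{constant} sections $\Xi_i':=\Xi_i\otimes 1$ and $\bar\Xi_i':=\bar\Xi_i\otimes 1$ in $Z(D\otimes_\pi\stoe\otimes_\pi\stoe)$. These agree with your elements at $t=0$ (since $\phi_0=\hat{\,\cdot\,}$ is the inclusion into the first Toeplitz slot), so the ``fibre over zero'' clause does not distinguish the two definitions; what distinguishes them is that for the constant context the degree-counting identities $e\bar S^{|\xi_i|}(\,\cdot\,)S^{|\xi_j|}e=\delta\cdot e$ of Lemma \ref{into} take place entirely in the middle tensor slot, with the third slot passed through untouched, and condition (iii) is then Lemma \ref{approx} verbatim. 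Your remarks on (ii) and (iii) --- uniformity of the tameness estimates in $t$ and in the $t$-derivatives --- are sound and would transfer to the constant context (where they are even easier, the context being $t$-independent), but as written the proposal leaves unproved the one condition you yourself single out as the genuinely new obstacle.
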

\begin{proof} We may use the constants $\Xi_i':=\Xi_i\otimes 1$ as a Morita context on $C'$. The conditions are checked as in the previous case for $\Xi_i$ and $\bar \Xi_i$.
\end{proof}
We denote by $\sigma$ the resulting homomorphism
$$\sigma:C'\to  Z(D\otimes_\pi\stoe\otimes_\pi\stoe)\otimes_\pi\sco,\; x\mapsto(\bar\Xi_i' x\Xi_j)_{i,j},$$
and by $\sigma_t$ it's restriction to the fibre over $t$. Recall from Definition \ref{Def:ToepAlg} that $\iota : A \to \stoe_D$ is the inclusion, from Definition \ref{alpha} that $\iota_C :\mc{A}\to C$ denotes the canonical inclusion, and that $(\alpha,\bar\alpha):\stoe_D\rrarrow D\otimes\stoe\trianglerighteq C$ is the quasihomomorphism induced by the identity and $\Ad(1\otimes S)$.
\begin{Th}\label{PMV} Let $H$ be a split exact functor from the category of locally convex algebras to the category of abelian groups. Set $x:=H(\iota)$ and $y:=H(\alpha,\bar\alpha)$.  
\begin{enumerate}
\item\label{firststat} if $H(\iota_C):H(A)\to H(C)$ is right invertible,  $y$ is right invertible;
\item\label{secondstat} if $H$ is diffotopy invariant and $\sco$-stable, $D$ tame,  then $y$ is left invertible.
\end{enumerate}
Hence if $D$ is tame and $H$ a split exact, diffotopy invariant $\sco$-stable functor, $H(\stoe_D)\sim H(\mc{A})$, and the isomorphism is implemented by $\iota$.
\end{Th}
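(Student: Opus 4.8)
The plan is to base the whole argument on the single identity $y\circ x=H(\iota_C)$, proving \ref{firststat} from it directly and \ref{secondstat} by running Cuntz's Toeplitz–Bott argument through the diffotopy of Lemma~\ref{ahomotopy}; the final assertion is then a formal consequence.

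First I would compute the composite $y\circ x=H(\alpha,\bar\alpha)\circ H(\iota)$. By Lemma~\ref{quasiprops}(ii) this equals $H(\alpha\circ\iota,\bar\alpha\circ\iota)$. Since $\alpha=\id$ and $\bar\alpha=\Ad(1\otimes S)$, on $\iota(\mc{A})$ the difference is $a\mapsto a\otimes 1-a\otimes S\bar S=a\otimes e=\iota_C(a)$; this is a homomorphism because $e$ is idempotent, and it is orthogonal to $\bar\alpha\circ\iota$ because $eS=0=\bar S e$ in $\stoe$. Lemma~\ref{quasiprops}(v) then yields $y\circ x=H(\iota_C)$. Statement \ref{firststat} is now immediate: if $r$ is a right inverse of $H(\iota_C)$, then $x\circ r$ is a right inverse of $y$. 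This same identity is what I will invoke at the very end.

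For statement \ref{secondstat} I would exploit the diffotopy $(\Phi,\Psi)$. At $t=0$ one has $C'_0\cong C$ and $(\Phi_0,\Psi_0)$ is $(\alpha,\bar\alpha)$ under this identification, so $H(\Phi_0,\Psi_0)=y$; at $t=1$ the map $\iota_1:\stoe_D\to C'_1$ is an isomorphism, and since $\Phi_1-\Psi_1=\iota_1$ is a homomorphism orthogonal to $\Psi_1$ (again because $eS=0=\bar S e$), Lemma~\ref{quasiprops}(v) gives $H(\Phi_1,\Psi_1)=H(\iota_1)$, an isomorphism. Writing $H(\Phi_t,\Psi_t)=H(ev_t)\circ H(\Phi,\Psi)$ by Lemma~\ref{quasiprops}(iii), the two endpoints $y$ and $H(\iota_1)$ both factor through the one map $H(\Phi,\Psi):H(\stoe_D)\to H(C')$. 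To convert this into left invertibility of $y$ I would transport the whole picture through the Morita context $\sigma$ into the reference algebra $\bar C\cong\sco\otimes_\pi\stoe_D$: tameness of $D$ (via Lemmas~\ref{into} and~\ref{approx}) is exactly what makes $\sigma$ a genuine context, and $\sco$-stability identifies $H(\bar C)$ with $H(\stoe_D)$. Diffotopy invariance then renders the $t=0$ and $t=1$ data comparable inside $H(\bar C)$, and because the $t=1$ datum is an isomorphism one reads off that $y$ admits a left inverse.

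The main obstacle is precisely this last step: the fibre $C'_t$ is \emph{not} constant — it is strictly larger over the interior than at the endpoints — so diffotopy invariance cannot be applied to $(\Phi,\Psi)$ directly on $C'$. Introducing $\bar C$ and the context $\sigma$, and the reason the growth/tameness hypothesis enters at all, is exactly to rectify this jump to a constant reference where invariance applies. Granting \ref{firststat} and \ref{secondstat}, the final statement follows formally: under the stated hypotheses Corollary~\ref{equivalenceI}, together with the identification of the bi-context map with $H(\iota_C)$, shows that $H(\iota_C)$ is an isomorphism; then \ref{firststat} makes $y$ right invertible and \ref{secondstat} makes it left invertible, so $y$ is an isomorphism, whence $x=y^{-1}\circ H(\iota_C)$ is a composite of isomorphisms and $H(\iota)$ is the desired isomorphism $H(\mc{A})\xrightarrow{\sim}H(\stoe_D)$.
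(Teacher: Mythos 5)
Your proposal is correct and follows essentially the same route as the paper's proof: part (i) from the identity $H(\alpha,\bar\alpha)\circ H(\iota)=H(\alpha\circ\iota,\bar\alpha\circ\iota)=H(\iota_C)$, and part (ii) by passing to the quasihomomorphism $(\Phi,\Psi)$, rectifying the non-constant fibres of $C'$ via the context $\sigma$ into the constant algebra $\bar C$ (which is where tameness enters), and comparing the endpoints $t=0$ and $t=1$ by diffotopy invariance, the $t=1$ endpoint being the stabilisation $\iota_1$ up to $\sco$-stability. You even supply two small details the paper leaves implicit, namely the orthogonality argument behind $H(\alpha\circ\iota,\bar\alpha\circ\iota)=H(\iota_C)$ and the final deduction that $x=y^{-1}\circ H(\iota_C)$ is itself the isomorphism.
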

\begin{proof}
The first part is clear by Lemma \ref{quasiprops}. In fact: 
$$H(\alpha,\bar\alpha)\circ H(\iota)=H(\alpha\circ \iota ,\bar\alpha\circ \iota)=H(\iota_C).$$ Hence if $z$ is a right inverse for $H(\iota_C)$, then $H(\iota)\circ z$ is a right inverse of $y$.

In order to prove \ref{secondstat}, we show that there exists a morphism $z'$ such that  $z'\circ y$ is left invertible. For $z'$, we take the inclusion of $D\otimes_\pi \stoe\hookrightarrow D\otimes_\pi\stoe\otimes_\pi\stoe$, in other words, we simply view $(\alpha,\bar\alpha)$ as a quasihomomorphism into a larger algebra, namely as the quasihomomorphism $(\Phi_0,\Psi_0)$.   Because $\phi_t$ is unital,
$$\Phi_1(a\otimes 1)=a\otimes 1\otimes 1=(\Ad(1\otimes \hat S)(a\otimes \hat 1))+(a\otimes \check 1),$$
$$\Phi_1(S_\xi\otimes S)=S_\xi\otimes(\hat S^2\hat{ \bar S}+\check S)=(\Ad(1\otimes \hat S)(S_\xi\otimes S))+(S_\xi\otimes \check S).$$
 Furthermore $\Image(\Ad(1\otimes \hat S))$ and $\Image(\iota_1)$ are orthogonal because $\bar Se=0=eS$.
We now enlarge again the range of the quasihomomorphism by composing with the stabilisation $\sigma$. We thus set $\bar\Phi:=\sigma\circ\Phi$ and $\bar\psi:=\sigma\circ\Psi$. This yields a quasihomomorphism
$$(\bar\Phi,\bar\Psi):\stoe_D\rrarrow Z(D\otimes_\pi\stoe\otimes_\pi\stoe)\otimes\sco\trianglerighteq\bar C\otimes_\pi \sco.$$

Note that $$\bar\Phi_t=\sigma_t\circ\Phi_t.$$
Hence we may apply Lemma \ref{quasiprops}  to deduce:
\begin{align*} H(\bar\Phi_0,\bar\Psi_0)=H(\bar\Phi_1,\bar\Psi_1)=H(\sigma_1)\circ H(\Ad(1\otimes \hat S)\oplus \iota_1,\Ad(1\otimes \hat S))=H(\sigma_1)\circ H(\iota_1).
\end{align*}
Now $\iota_1$ corresponds to the stabilisation map under the obvious isomorphism $$C'\cong\stoe_D\otimes \sco,$$ and therefore $y$ is left invertible.

Now if $D$ is tame, then $y$ has a right inverse because $H(\iota_C)$ has a right inverse -- being invertible by Theorem \ref{context}. As we have shown that $y$ has also a left inverse above, $y$ is invertible (and $H(\iota_C)\circ H(\tau)$ is it's inverse, where $\tau$ is the stabilisation coming from the Morita bicontext $C\to \mc{A}$).  
\newline This proves the theorem.
\end{proof}
The basic idea is to show invertibility by using a Morita context that identifies the noncanonical copy of $\co\otimes A$ sitting at $0$ in $C'$ with a subalgebra of the stabilisation of $\toe_\alpha$, and extends from zero to a context on all fibres.

\section{Determination of the Kernel and Quotient}
We now determine the remaining terms in 
$$\xymatrix{0\ar[r] &\Ker(\pi_D)\ar[r]& \stoe_D\ar[r]^-{\pi_D}&\Image(\pi_D)\ar[r]&0,}.$$
We call the above sequence canonically split, if the split $\id_D\otimes \rho:D\otimes_\pi \fn^\infty(\sph^1)\to D\otimes_\pi\stoe$, where $\rho$ is the split of the smooth Toeplitz extension, restricts to a split of $\pi_D$. We suppose in all this section that the above sequence is canonically split.
\begin{Prop} If $1\otimes e$ is in $\Ker(\pi_D)$, then
$$C=\stoe_D (1\otimes e)\stoe_D=\Ker (\pi_D).$$
\end{Prop}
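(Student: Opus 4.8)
The hypothesis $1\otimes e\in\Ker(\pi_D)$ will be used only through its consequence $1\otimes e\in\stoe_D$ (once this holds, $\pi_D(1\otimes e)=1\otimes\pi(e)=0$ is automatic, since $\pi(e)=\pi(1-S\bar S)=0$). The plan is to prove the two equalities separately, the first being formal and the second resting on a normal-form analysis of words in $\stoe_D$.

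\emph{First equality.} I would read $C=\stoe_D(1\otimes e)\stoe_D$ straight off Lemma \ref{triviallem}: the right-hand side is the closed $\stoe_D$-stable algebra generated by the products $x(1\otimes e)y$ with $x,y\in\stoe_D^+$, and since $LC(\toe_D^0)=\stoe_D$ it coincides with $\toe_D^0(1\otimes e)\toe_D^0$, which was already identified with $C$ in Section \ref{Morita context}. The two inclusions are visible on generators: $a\otimes e=(a\otimes 1)(1\otimes e)$, $S_\xi\otimes Se=(S_\xi\otimes S)(1\otimes e)$ and $\bar S_\xi\otimes e\bar S=(1\otimes e)(\bar S_\xi\otimes\bar S)$ exhibit the generators of $C$ as elements of $\stoe_D(1\otimes e)\stoe_D$, while $1\otimes e=1_A\otimes e\in\mc{A}\otimes e\subseteq C$ together with the $\stoe_D$-stability and closedness of $C$ gives the reverse. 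This also shows $C\subseteq\stoe_D$, using $1\otimes e\in\stoe_D$.

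\emph{The inclusion $C\subseteq\Ker(\pi_D)$.} Combining $C\subseteq\stoe_D$ with the inclusion $C\subseteq D\otimes_\pi\sco$ established in Section \ref{Morita context} and $\sco=\Ker(\pi)$ gives $C\subseteq\stoe_D\cap\Ker(\id_D\otimes\pi)=\Ker(\pi_D)$. For the reverse inclusion, let $P:=\id_D\otimes P_{\sco}$, where $P_{\sco}:\stoe\to\sco$ is the continuous projection from the decomposition $\stoe=\sco\oplus\fn^\infty(\sph^1)$. For $z\in\Ker(\pi_D)$ one has $z\in D\otimes_\pi\sco$, hence $z=P(z)$; so it suffices to prove $P(\stoe_D)\subseteq C$. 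As $P$ is continuous, $C$ closed, and $\toe_D^0$ dense in $\stoe_D$, I would reduce this to showing $P(w)\in C$ for every word $w$ in the generators $\mc{A}\otimes 1,\ S(\mc{E})\otimes S,\ \bar S(\mc{E})\otimes\bar S$.

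\emph{The normal-form step.} The core is an induction on word length showing that every word decomposes as $w=n(w)+c(w)$, with $c(w)\in C$ and $n(w)$ a finite sum of reduced monomials $S_\zeta\otimes S^{|\zeta|}$, $\bar S_\zeta\otimes\bar S^{|\zeta|}$, $a\otimes 1$. Writing $w=w'g$, the term $c(w')g$ stays in $C$ by $\stoe_D$-stability, and $n(w')g$ is reduced using the relations $\bar S_\xi S_\eta=\langle\xi|\eta\rangle_A$ and $S_\xi\bar S_\eta={}_A\langle\xi|\eta\rangle$ in $D$ (the latter the honest bimodule identity recalled in the Pimsner-algebra section) together with $\bar S S=1$ and $S\bar S=1-e$ in $\stoe$. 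An annihilation meeting a creation collapses with no defect, so the only compact term is produced when a creation monomial meets an annihilation generator and equals $-S_{\zeta'}\otimes S^{|\zeta'|}e=-(S_{\zeta'}\otimes S^{|\zeta'|})(1\otimes e)\in\stoe_D(1\otimes e)\subseteq C$. Since the reduced monomials have $\stoe$-parts $S^{|\zeta|},\bar S^{|\zeta|},1$ lying in the function summand $\fn^\infty(\sph^1)$, we get $P(n(w))=0$ and hence $P(w)=c(w)\in C$. Together with the earlier inclusion this yields $\Ker(\pi_D)=C$, and with the first equality the full statement.

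I expect the normal-form induction to be the main obstacle. One must check that the reductions in the $D$-factor and the $\stoe$-factor stay synchronised, so that every $e$-term carries matching multivector and operator lengths (and therefore genuinely lies in $C$), and that the full $D$-side collapse $S_\xi\bar S_\eta={}_A\langle\xi|\eta\rangle$ — which has no $e$-defect, unlike $S\bar S=1-e$ — does not break this matching. Once the bookkeeping is in place, the remaining steps are routine.
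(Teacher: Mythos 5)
Your argument is correct and is essentially the paper's: your projection $P=\id_D\otimes P_{\sco}$ is exactly $\id-\rho_D\circ\pi_D$, so proving $P(w)\in C$ on words and invoking continuity and density of $\toe_D^0$ is the same step as the paper's approximation of $x\in\Ker(\pi_D)$ by finite normal-form sums $x_0$ together with the verification that $x_0-\rho_D(\pi_D(x_0))$ lies in the ideal generated by $1\otimes e$ (the paper's seminorm juggling just establishes the continuity you invoke directly). The only cosmetic difference is that the paper keeps mixed monomials $S_{\xi_I}\bar S_{\xi_J}\otimes S^{|I|}\bar S^{|J|}$ and produces the $e$-defects only when subtracting the lift, whereas you expand $S\bar S=1-e$ eagerly during the word reduction; both variants rely on the same bimodule identity $S_\xi\bar S_\eta={}_A\langle\xi|\eta\rangle$ to keep the $D$-factor and $\stoe$-factor lengths matched.
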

\begin{proof}  $C=\stoe_D (1\otimes e)\stoe_D$ is   clear by Lemma \ref{triviallem}. $\stoe_D(1\otimes e)\stoe_D\subseteq \Ker(\pi_D)$ is obvious by hypothesis. Because
$$\xymatrix{0\ar[r] &D\otimes_\pi\sco\ar[r]& D\otimes_\pi\stoe\ar[r]^-{1\otimes\pi}& D\otimes_\pi LC(U)\ar[r]&0}$$
stays exact, $\Ker(\pi_D)=\Ker(1\otimes \pi)\cap \stoe_D$.

Now let $x\in \Ker(\pi_D)$, $\ee>0$, and fix a continuous seminorm $p$ on $\stoe_D$, choose a continuous seminorm $q$ such that there is $c_{\rho_D}$ with $p(\rho_D(y))\leq c_{\rho_D}q(y)$, and choose a continuous seminorm $p'$ on $\stoe_D$ such that $q(\pi_D(x'))\leq c_{\pi_D}p'(x')$ for all $x'$. As $x\in \stoe_D$, we may choose a finite sum $x_0=\sum_{I,J} S_{\xi_I}\bar S_{\xi_J}\otimes S^{|I|}\bar S^{|J|}$ with $p(x-x_0),p'(x-x_0)<\ee$. Then $x_0-\rho_D(\pi_D(x_0))\in\Ker(\pi_D)$ and
$$p(x-(x_0-\rho_D(\pi_D(x_0))))\leq p(x-x_0)+p(\rho_D(\pi_D(x-x_0)))\leq (1+c_{\rho_D}c_{\pi_D})\ee.$$ 
Now $x_0-\rho_D(\pi_D(x))$ is easily seen to be an element in the ideal generated by $1\otimes e$ in $\stoe_D$, and thus the other inclusion is proved.
\end{proof}
\begin{Def}\label{gaugesmooth} Let $D\subseteq \mc{O}_E$ be a locally convex algebra. Then we call $D$ \emph{gauge-smooth} if the application
$$\mc{O}_E\to\fn(\mathbb{T})\otimes_\pi\mc{O}_E,\; x\mapsto [t\mapsto\gamma_t(x)]$$
restricts to a continuous homomorphism
$$\tau:D\to \fn^\infty(\mathbb{T})\otimes_\pi D.$$
We say $D$ is generated by $\mc{E}$ if the algebra generated by $S(\mc{E})$, $\bar S(\mc{E})$ and $D\cap A$ is dense in $D$.
\end{Def}
In the setting of metrizable algebras, it suffices that $D$ be invariant  and its elements smooth for the gauge action.
\begin{Prop}\label{image}
Let $D\subseteq \mc{O}_E$ be a locally convex gauge-smooth subalgebra generated by $\mc{E}$. Then $\Image(\pi_{D})$ is isomorphic to $D$.\end{Prop}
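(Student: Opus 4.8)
The plan is to exhibit an explicit isomorphism $D\xrightarrow{\sim}\Image(\pi_D)$ built from the gauge-smoothness map $\tau$, and to invert it by evaluating at the identity of the gauge group. First I would pin down $\Image(\pi_D)$ concretely. Since the quotient map $\pi\colon\stoe\to LC(U)=\fn^\infty(\mathbb{T})$ of the smooth Toeplitz extension sends $S\mapsto U$ and $\bar S\mapsto U^{-1}$, the restriction $\pi_D=(\id_D\otimes\pi)|_{\stoe_D}$ carries the generators $a\otimes 1$, $S_\xi\otimes S$, $\bar S_\xi\otimes\bar S$ of $\stoe_D$ to $a\otimes 1$, $S_\xi\otimes U$, $\bar S_\xi\otimes U^{-1}$ respectively. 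Hence $\Image(\pi_D)$ is the subalgebra of $D\otimes_\pi\fn^\infty(\mathbb{T})$ generated by $\mc{A}\otimes 1$, $S(\mc{E})\otimes U$ and $\bar S(\mc{E})\otimes U^{-1}$.

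Next I would use gauge-smoothness to produce the candidate isomorphism. Composing $\tau\colon D\to\fn^\infty(\mathbb{T})\otimes_\pi D$ with the flip $\Sigma\colon\fn^\infty(\mathbb{T})\otimes_\pi D\to D\otimes_\pi\fn^\infty(\mathbb{T})$ (which is always an algebra homomorphism) gives a continuous homomorphism $\beta:=\Sigma\circ\tau\colon D\to D\otimes_\pi\fn^\infty(\mathbb{T})$. On a homogeneous element $x$ of degree $n$ one has $\gamma_t(x)=e^{2\pi int}x$, so $\beta(x)=x\otimes U^n$; in particular $\beta(a)=a\otimes 1$, $\beta(S_\xi)=S_\xi\otimes U$ and $\beta(\bar S_\xi)=\bar S_\xi\otimes U^{-1}$. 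Thus $\beta$ carries the generators of $D$ exactly onto the generators of $\Image(\pi_D)$ found above; since $D$ is generated by $\mc{E}$, continuity gives $\beta(D)=\Image(\pi_D)$.

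It remains to invert $\beta$. Set $\mu:=\id_D\otimes\eval_0\colon D\otimes_\pi\fn^\infty(\mathbb{T})\to D$, where $\eval_0$ is evaluation of a smooth function at the identity $1\in\mathbb{T}$; this is a continuous homomorphism because point evaluation is continuous on $\fn^\infty(\mathbb{T})$. Using $(\id_D\otimes\eval_0)\circ\Sigma=\eval_0\otimes\id_D$ and the fact that $(\eval_0\otimes\id_D)\circ\tau$ is the map $x\mapsto\gamma_0(x)$, one gets $\mu\circ\beta=\id_D$ immediately from $\gamma_0=\id$. Conversely $\beta\circ\mu$ fixes each generator $a\otimes 1$, $S_\xi\otimes U$, $\bar S_\xi\otimes U^{-1}$ of $\Image(\pi_D)$, so by continuity $\beta\circ\mu=\id$ on $\Image(\pi_D)$. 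Hence $\beta$ and $\mu$ are mutually inverse continuous homomorphisms and $\Image(\pi_D)\cong D$.

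The main obstacle is conceptual rather than computational: the whole argument hinges on gauge-smoothness, which is precisely what guarantees that $\tau$ lands in $\fn^\infty(\mathbb{T})\otimes_\pi D$ (not merely in $\fn(\mathbb{T})\otimes_\pi\mc{O}_E$) and is continuous, so that $\beta$ is a genuine morphism of locally convex algebras; and the hypothesis that $D$ is generated by $\mc{E}$ is exactly what forces $\beta(D)$ to fill out all of $\Image(\pi_D)$ rather than a proper dense subalgebra. The point requiring genuine care is that $\beta$ is a \emph{topological} isomorphism onto its image, equivalently that the image of $\pi_D$ coincides with the closed generated subalgebra; this is why one must produce the two-sided continuous inverse $\mu$ rather than settle for an injective continuous $\beta$.
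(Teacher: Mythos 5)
Your construction of $\beta=\Sigma\circ\tau$ and $\mu=\id_D\otimes\eval_0$ is exactly the paper's pair of mutually inverse maps (the paper uses the corestriction of $\id\otimes\eval_1$ and inverts it by $\tau$), so the core of your argument is the intended one. However, there is a genuine gap at the step ``since $D$ is generated by $\mc{E}$, continuity gives $\beta(D)=\Image(\pi_D)$.'' Continuity only gives one-sided containments: $\beta(D)$ lies inside the \emph{closed} subalgebra $\overline{B}\subseteq D\otimes_\pi\fn^\infty(\mT)$ generated by $\mc{A}\otimes 1$, $S(\mc{E})\otimes U$, $\bar S(\mc{E})\otimes U^{-1}$, and likewise $\Image(\pi_D)=\pi_D(\stoe_D)=\pi_D(\overline{\toe_D^0})$ is squeezed between the algebraic span of those generators and $\overline{B}$. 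Images of continuous homomorphisms of locally convex algebras need not be closed, so neither set is forced to equal $\overline{B}$, nor to equal each other. What your two identities actually establish is this: $\mu\circ\beta=\id_D$, and $\beta\circ\mu=\id$ on all of $\overline{B}$ (your continuity argument applies to $\overline{B}$, since $\beta\circ\mu$ is a continuous endomorphism of $D\otimes_\pi\fn^\infty(\mT)$ fixing the generators), whence $\beta:D\to\overline{B}$ is an isomorphism with inverse $\mu|_{\overline{B}}$ and $\Image(\pi_D)\subseteq\beta(D)$. What is missing is the reverse containment $\beta(D)\subseteq\Image(\pi_D)$, equivalently that $\Image(\pi_D)$ is closed; a priori $\Image(\pi_D)$ could be a proper dense subalgebra of $\overline{B}$, and then $\beta$ would not corestrict to an isomorphism onto it.

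This is precisely where the standing hypothesis of this section of the paper enters, which your proof never invokes: the sequence is assumed \emph{canonically split}, i.e., $\id_D\otimes\rho$ restricts to a split of $\pi_D$, where $\rho$ is the continuous linear split of the smooth Toeplitz extension. Closedness of the image then follows: if $y\in\overline{\Image(\pi_D)}$, then by continuity $(\id_D\otimes\rho)(y)\in\overline{(\id_D\otimes\rho)(\Image(\pi_D))}\subseteq\overline{\stoe_D}=\stoe_D$, and applying $\pi_D$ and using $\pi\circ\rho=\id$ gives $y=\pi_D\bigl((\id_D\otimes\rho)(y)\bigr)\in\Image(\pi_D)$. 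Hence $\Image(\pi_D)$ is closed and equals $\overline{B}$, after which your argument goes through verbatim. In short: you proved $D\cong\overline{B}$, which is the correct isomorphism, but the identification $\overline{B}=\Image(\pi_D)$ is not a consequence of continuity; it is the first, and crucial, step of the paper's proof and requires the splitting hypothesis.
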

\begin{proof}
The existence of the splitting implies that the image of $\pi_{D}$ is closed, hence generated by $S_\xi\otimes U$, $\bar S_\xi\otimes U$ and $\mc{A}\otimes 1$. The (co)restriction of
$$ \id \otimes \eval_1:\mc{O}_E \otimes \fn(\sph^1)  \to \mc{O}_E \otimes \mC \simeq \mc{O}_E,$$
to $\im \pi_D$ defines a continuous morphism of algebras $\im \pi_D \to D$. Checking on generators, one sees that $\tau$ is it's inverse.
\end{proof}

\section{The long exact sequences}
We now use the theory $kk$ to show that there are exact sequences in $kk$ and bivariant cyclic theory that are compatible with the Chern-Character and the boundary maps. In particular, we get, specialising the first algebra to $\mC$, such sequences in $K$-theory and  cyclic theory.

However, to give our first result, we do not really need $kk$-theory, but we obtain a six-term-exact sequence for every half exact $\sco$-stable functor.
\begin{Def} A functor $H$ on the category of locally convex algebras with values in abelian groups is called
 \emph{half-exact} if for every linearly split short exact sequence
\[\xymatrix{ 0\ar[r]&\mc{A}\ar[r]&\mc{B}\ar[r]&\mc{C}\ar[r]&0}\] 
the sequence
\[\xymatrix{ H(\mc{A})\ar[r]&H(\mc{B})\ar[r]&H(\mc{C})}\]
obtained by applying $H$ is exact.
\end{Def}
\begin{Def} If $D\subseteq \mc{O}_E$ is a unital locally convex subalgebra of a Pimsner algebra such that $D\cap A\subseteq D$ is closed, then we say that $D$ is a \emph{smooth Pimsner algebra} if it is tame, gauge-smooth, generated by $\mc{E}$ and canonically split.
\end{Def}
Note that $1\otimes e\in \Ker(\pi_D)$ holds because there is a frame for $\mc{E}$ inside $D$ by hypothesis.
\begin{Th}\label{generalth} Let $D\subseteq \mc{O}_E$ be a smooth Pimsner algebra that. Then for every half-exact, diffotopy invariant and $\sco$-stable functor there is a six-term exact sequence
\[\xymatrix{ H(\mc{A})\ar[r]&H(\mc{A})\ar[r]&H(D)\ar[d]\\
H(\mathscr{S}D)\ar[u]&H(\mathscr{S}\mc{A})\ar[l]&H(\mathscr{S}\mc{A})\ar[l]
}\]
where $\mathscr{S}$ denotes the smooth suspension.
\end{Th}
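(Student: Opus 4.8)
The plan is to read off the claimed hexagon from the single extension already isolated in the previous section, replacing four of its six terms by means of the equivalences established earlier. Since almost all of the analytic work is contained in Theorem~\ref{PMV} and Corollary~\ref{equivalenceI}, the proof is essentially an assembly step.

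First I would set up the extension. As $D$ is a smooth Pimsner algebra it is in particular canonically split, tame, gauge-smooth and generated by $\mc{E}$, and (as noted before the statement) $1\otimes e\in\Ker(\pi_D)$. Hence the Proposition identifying the kernel applies and gives $\Ker(\pi_D)=C$, while Proposition~\ref{image} gives $\Image(\pi_D)\cong D$. The object to be fed into the homological machinery is therefore the linearly (indeed canonically) split extension
\[\xymatrix{0\ar[r]&C\ar[r]&\stoe_D\ar[r]^-{\pi_D}&D\ar[r]&0.}\]

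Next I would invoke the cyclic six-term exact sequence attached to a linearly split extension for a half-exact, diffotopy invariant, $\sco$-stable functor; this is the locally convex analogue of the classical construction (cf.\ \cite{CuntzMeyer},\cite{MR2240217}), in which half-exactness supplies exactness in the middle, diffotopy invariance together with the mapping-cone construction produces the connecting maps, and $\sco$-stability furnishes the Bott-type periodicity that closes the long exact sequence into a hexagon
\[\xymatrix{H(C)\ar[r]&H(\stoe_D)\ar[r]&H(D)\ar[d]\\ H(\mathscr{S}D)\ar[u]&H(\mathscr{S}\stoe_D)\ar[l]&H(\mathscr{S}C)\ar[l].}\]
It then remains to substitute. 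A half-exact, diffotopy invariant functor is automatically split exact (the section makes $H(\pi_D)$ split surjective, and half-exactness together with diffotopy invariance forces injectivity of $H$ of the inclusion by a standard argument), so Theorem~\ref{PMV} applies and gives $H(\stoe_D)\cong H(\mc{A})$, implemented by $\iota$; Corollary~\ref{equivalenceI} gives $H(C)\cong H(\mc{A})$. Applying both facts to the functor $H\circ\mathscr{S}$ — again half-exact, diffotopy invariant and $\sco$-stable — yields $H(\mathscr{S}\stoe_D)\cong H(\mathscr{S}\mc{A})$ and $H(\mathscr{S}C)\cong H(\mathscr{S}\mc{A})$. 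Inserting these four isomorphisms and composing the structure maps with them reproduces exactly the diagram of the statement; as we only replace groups by isomorphic ones compatibly with the arrows, exactness is preserved by transport of structure.

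The main obstacle is conceptual rather than computational: it is the availability of the \emph{cyclic} hexagon at all, that is, the periodicity turning the half-exact long exact sequence (which a priori only extends to the right as far as $H(D)$) into a closed six-term sequence. This is precisely the Bott-periodicity input alluded to in the introduction, built into the Toeplitz construction and guaranteed by $\sco$-stability, and it is the reason the hypotheses on $H$ are chosen as they are. A secondary point demanding care is the verification that half-exactness plus diffotopy invariance entails the split exactness under which Theorem~\ref{PMV} was proved, so that the identification $H(\stoe_D)\cong H(\mc{A})$ is legitimately at our disposal here.
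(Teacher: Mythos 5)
Your argument is correct and follows essentially the same route as the paper's own proof: apply the (Bott-periodic) long exact sequence of the half-exact, $\sco$-stable functor to the Toeplitz extension $0\to\Ker(\pi_D)\to\stoe_D\to\Image(\pi_D)\to 0$, then substitute $H(\mc{A})$ for the kernel via the Morita bi-context, $H(\mc{A})$ for $H(\stoe_D)$ via Theorem~\ref{PMV} (half-exactness giving split exactness), and $D$ for the image via Proposition~\ref{image}. No gaps worth noting.
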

\begin{proof} First, extend the half exact-functor in the usual fashion to a homology theory $H_n$ -- compare, \textsl{e.g.},  Lemma 4.1.5 \cite{MR2207702}. Observe that $H_n$ has automatically Bott-periodicity, either by carrying over the proof from \cite{MR2240217}, or by universality of $kk$. Apply the long exact sequence for $H$ and Theorem 8.5 to the short exact sequence
$$\xymatrix{0\ar[r] &\Ker(\pi_D)\ar[r]& \stoe_D\ar[r]^-{\pi_D}&\Image(\pi_D)\ar[r]&0.}$$
By Theorem \ref{context}, we may replace the kernel, using the Morita bi-context, with $\mc{A}$. Further $\stoe_D$ may be replaced by $\mc{A}$ using Theorem \ref{PMV} (half exactness implies split exactness). An application of Proposition \ref{image} shows that $\Image(\pi_D)$  can be replaced with $D$.
\end{proof}

We now make use the version of $kk$ for $m$-algebras introduced in \cite{MR1456322}.

\begin{Th} Let $D$ be as above; suppose $D$ is an $m$-algebra, and the Chern-Connes character is a complex isomorphism on $\mc{A}$ (eventually with coefficients), then it is also a complex isomorphism $HP(D)\to K_0(D)$ (with coefficients). In particular, this holds with coefficients $\mC$ for any $D$ with commutative base $\mc{A}$.
\end{Th}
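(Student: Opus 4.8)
The plan is to bootstrap the desired statement about the Chern--Connes character from the previously established isomorphism $HP(\mathcal{A})\cong K_0(\mathcal{A})$ using the six-term exact sequence of Theorem \ref{generalth} together with naturality of the character. First I would apply Theorem \ref{generalth} twice: once to the functor $H=K_*$ (topological/algebraic $K$-theory, which is diffotopy invariant, $\sco$-stable and half-exact on $m$-algebras) and once to $H=HP_*$ (periodic cyclic homology, which enjoys the same three properties by the results of Cuntz--Meyer on which the paper relies). This produces two six-term sequences built from the same short exact sequence $0\to\Ker(\pi_D)\to\stoe_D\to\Image(\pi_D)\to 0$, and after the identifications $\Ker(\pi_D)\sim\mathcal{A}$, $\stoe_D\sim\mathcal{A}$ and $\Image(\pi_D)\cong D$ one gets the PMV-type hexagons relating $K_*(\mathcal{A})$, $K_*(D)$ and their suspensions, and likewise for $HP_*$.

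Next I would place these two hexagons side by side and connect them vertically by the Chern--Connes character $\Ch\colon K_*\to HP_*$. The key structural input is that $\Ch$ is a natural transformation between these homology theories; since both six-term sequences are induced functorially from \emph{one and the same} extension, every square in the resulting ladder commutes (the vertical character maps intertwine the horizontal connecting maps and the index/boundary maps, because all of these are built by applying a natural functor to the fixed extension). This gives a morphism of long exact sequences in which the character at the $\mathcal{A}$-slots is by hypothesis an isomorphism.

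The main step is then a routine application of the Five Lemma. Writing the ladder with $\Ch_\mathcal{A}$ (at the two $\mathcal{A}$ positions), $\Ch_{\mathscr{S}\mathcal{A}}$ and $\Ch_D$ as the vertical arrows, and knowing that the character commutes with the Bott/suspension isomorphism, the hypothesis that $\Ch_\mathcal{A}$ is an isomorphism forces $\Ch_{\mathscr{S}\mathcal{A}}$ to be one as well (suspension is the same for both theories and is compatible with $\Ch$). The Five Lemma, applied to the segment of the ladder surrounding the $D$-term, then yields that $\Ch_D\colon HP_*(D)\to K_*(D)$ is an isomorphism. Specialising to degree zero gives the stated isomorphism $HP(D)\to K_0(D)$, and the coefficient version goes through verbatim since tensoring the theories by coefficients preserves exactness and naturality of $\Ch$.

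I expect the genuine obstacle to be not the diagram-chase but verifying that all the identifications feeding into the two hexagons are \emph{simultaneously} natural in a way compatible with $\Ch$ — in particular that the Morita equivalences $\Ker(\pi_D)\sim\mathcal{A}$ and $\stoe_D\sim\mathcal{A}$ from Theorems \ref{context} and \ref{PMV} are implemented by maps under which $\Ch$ is functorial, and that the boundary maps in the $K$- and $HP$-sequences really correspond under the character. This is precisely the compatibility of the Chern--Connes character with the PMV boundary map; once it is granted (it follows from the universal property of $kk$ and the fact that $\Ch$ factors through $kk$), the remainder is the formal Five Lemma argument. The final sentence, concerning commutative base $\mathcal{A}$, is immediate: for commutative $\mathcal{A}$ the character $HP(\mathcal{A})\to K_0(\mathcal{A})$ is an isomorphism by the classical results invoked earlier, so the hypothesis of the theorem is automatically met.
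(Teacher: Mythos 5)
Your proposal is essentially the paper's own argument: the paper likewise places the two PMV hexagons (obtained from Theorem \ref{generalth}) side by side, connects them by the Chern--Connes character of \cite{MR1456322}, and concludes by the five lemma. The only cosmetic difference is that the paper works with the bivariant theories $kk_*(B,-)$ and $HP_*(B,-)$ and the \emph{bivariant} character (which packages the naturality and boundary-compatibility issues you rightly flag into the universal property of $kk$), whereas you phrase it monovariantly with $K_*$ and $HP_*$.
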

\begin{proof}
This follows by an application of the five Lemma to 
\[\scalebox{0.9}{\xymatrix{ &HP_*(B,\mc{A})\ar[rr]&&HP_*(B,\mc{A})\ar[dl]\\
  &&HP_*(B,D)\ar[ul]\\
kk_*(B,\mc{A})\ar[rr]\ar[uur]&{}&kk_*(B,\mc{A})\ar[dl]\ar[uur]\\
&kk_*(B,D)\ar[ul]\ar[uur]
}}\]
	where the maps between $kk$ and cyclic theory denote the bivariant Chern-Connes character from \cite{MR1456322}, the others come from the long exact sequence.
\end{proof}

\section{Application to Quantum Heisenberg Manifolds}

\begin{Def}[Quantum Heisenberg Manifolds]
Given two real numbers $\mu, \nu$ and an integer $c>0$, we define the Quantum Heisenberg Manifolds (QHMs or $\QHM[c][\mu,\nu]$) as the $C^*$-completion of the algebra
$$ D_0 = \{ F \in C_c(\Z \to C_b(\R \times S^1)) | F(p,x + 1, y) = e(-c p(y - p \nu)) F(p,x,y) \} $$
where $e(x) = e^{2 \pi i x}$, equipped with the multiplication:
\begin{equation}
\label{Eqn:Comp}
 (F_1 \cdot F_2)(x,y,p) = \sum_{q \in \Z} F_1(x,y,q) F_2(x - q 2 \mu,y- q 2 \nu,p-q),
\end{equation}
and where we have now switched to the notational standard, writing the $\mZ$-variable $p$ in the last component.
\end{Def}

In the following, we will write $\QHM $ instead of $\QHM[c][\mu,\nu]$ whenever there is no risk of confusion. One can prove that the involution is:
$$F^*(x,y,p) = \overline{F}(x - 2 p \mu, y - 2 p \nu,p) .$$
It has been proved in \cite{AbadieEE} that $\QHM $ carries an action of $S^1$ -- the \emph{gauge action} -- that turns it into a \emph{generalised crossed product}. Moreover, in \cite{Katsura02}, \textsc{Katsura} proved that all generalised crossed products are (Katsura-)Pimsner algebras.

The grading of $\QHM $ as a Pimsner algebra, is given by the spectral subspaces of the gauge action. Hence:
\begin{equation}
\label{Eqn:Period}
 \QHM ^{(n)} = \{ F(p,x,y) = \delta_{n,p} f(x,y) | f(x + 1, y) = e(-c n (y - n \nu)) f(x,y) \} 
\end{equation}

\begin{Lem}
\label{Lem:MorEqQHM}
Given $(c, \mu, \nu) \in \N^* \times \R^2$ and an integer $n \in \Z$, there is a frame of two elements $\xi_i^n$, $i = 1,2$ in $\QHM ^{(n)}$. In fact
\begin{align*}
(\xi_1^n)^* \xi_1^n + (\xi_2^n)^* \xi_2^n &= 1
&
\xi_1^n (\xi_1^n)^* + \xi_2^n (\xi_2^n)^* &= 1.
\end{align*}
Furthermore, we can choose $\xi_1^n$ and $\xi_2^n$ to be smooth functions.
\end{Lem}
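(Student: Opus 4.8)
The goal is to exhibit, for each fixed $n\in\Z$, two smooth elements $\xi_1^n,\xi_2^n$ in the spectral subspace $\QHM^{(n)}$ satisfying both the right- and left-frame identities. My plan is to work entirely with the explicit description of $\QHM^{(n)}$ furnished by equation \ref{Eqn:Period}: an element of degree $n$ is determined by a single function $f(x,y)$ on $\R\times S^1$ subject to the quasi-periodicity relation $f(x+1,y)=e(-cn(y-n\nu))f(x,y)$. Thus the first step is to translate the two operator identities into concrete equations for the associated functions $f_1,f_2$ attached to $\xi_1^n,\xi_2^n$, using the multiplication and involution formulae \ref{Eqn:Comp} and the involution formula. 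One computes $(\xi_i^n)^*\xi_i^n$ and $\xi_i^n(\xi_i^n)^*$ as degree-zero elements, i.e. functions on $\R\times S^1$, so each frame identity becomes a pointwise statement: a sum of two squared moduli (suitably shifted in the arguments) must equal the constant function $1$.

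The key construction is then to build $f_1,f_2$ as a smooth partition of unity adapted to the quasi-periodicity. The plan is to pick a smooth real-valued function $h$ on $\R$ supported on a fundamental-domain-sized interval such that the translates $\{h(\cdot - k)\}_{k\in\Z}$ form a smooth partition of unity, and then set $f_1,f_2$ to be appropriately normalized square-roots of a two-element subdivision of this partition, twisted by the phase factor $e(-cn(y-n\nu))$ so that the quasi-periodicity relation holds. Concretely I would arrange $|f_1(x,y)|^2+|f_2(x,y)|^2$ to sum to $1$ after the shifts dictated by \ref{Eqn:Comp} are accounted for; since the shifts in the composition depend only on the degree (here the summation index runs over the two frame vectors of fixed degree $n$), the shift amounts $2q\mu,2q\nu$ are constant and the normalization can be solved explicitly.

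The main obstacle I expect is the \emph{simultaneity} of the two identities. Producing a right frame ($\sum_i(\xi_i^n)^*\xi_i^n=1$) for a Hilbert bimodule is standard, but the left identity $\sum_i\xi_i^n(\xi_i^n)^*=1$ is an extra constraint, and the two expressions involve the arguments shifted in \emph{opposite} directions, so a naive partition of unity satisfying one will generally fail the other. The resolution exploits the bimodule compatibility of the scalar products recorded earlier (the relation ${}_A\langle\zeta|\xi\rangle\eta=\zeta\langle\xi|\eta\rangle_A$): because $\QHM$ arises from a generalised crossed product, translation by the gauge action is implemented by a measure-preserving transformation of $\R\times S^1$, so the left- and right-inner-product functions differ only by this invertible change of variables. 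I would therefore choose $h$ (hence $f_1,f_2$) to be invariant under the relevant translation symmetry, which forces the two partition-of-unity conditions to coincide and lets a single choice satisfy both identities at once.

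Finally, smoothness is immediate from the construction: $h$ is chosen in $C^\infty_c$, the phase factors $e(-cn(y-n\nu))$ are smooth, and products and square-roots of nowhere-vanishing smooth nonnegative functions remain smooth, so the resulting $\xi_i^n$ lie in the smooth part of $\QHM^{(n)}$ as required.
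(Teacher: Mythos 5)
Your construction is essentially the paper's: a smooth partition of unity in the $x$-variable, normalised so that the squares sum to $1$, extended to degree-$n$ elements via the quasi-periodicity relation \eqref{Eqn:Period}; both frame identities then reduce to the pointwise identity $|f_1|^2+|f_2|^2\equiv 1$. The one step that does not survive scrutiny is your proposed resolution of the ``simultaneity'' issue. You cannot in general choose $h$ invariant under translation by $2n\mu$ modulo $1$: for $2n\mu$ irrational, a smooth function on the circle invariant under that rotation must be constant, and then your $f_i$ would have constant modulus --- but no nowhere-vanishing element of $\QHM^{(n)}$ exists when $cn\neq 0$ (the winding of the phase $e(-cn(y-n\nu))$ in $y$ obstructs it), which is precisely why a frame with \emph{two} elements is needed. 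Fortunately the invariance is also unnecessary. Writing out \eqref{Eqn:Comp} for a fixed degree $n$, only the term $q=\pm n$ contributes, and one finds that $\xi_i^n(\xi_i^n)^*$ is the function $|f_i|^2$ while $(\xi_i^n)^*\xi_i^n$ is $|f_i|^2$ precomposed with the translation $(x,y)\mapsto(x+2n\mu,\,y+2n\nu)$. The two frame conditions therefore assert that one and the same function is identically $1$ before and after an invertible change of variables, so they are equivalent, and any single normalised partition of unity satisfies both at once. With the invariance requirement deleted, your argument is correct and coincides with the paper's proof, which takes $\chi_i=f_i/\sqrt{f_1^2+f_2^2}$ for a partition of unity subordinate to small neighbourhoods of $[0,\nicefrac{1}{2}]$ and $[\nicefrac{1}{2},1]$, chosen to vanish with all derivatives near the ends of the fundamental domain so that the quasi-periodic extension is smooth --- a point your sketch handles implicitly through the requirement that the translates of $h$ form a smooth partition of unity.
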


\begin{proof}
On $D^{(0)}$, we get a frame from $\xi_1:=1$, $\xi_2:=0$. For $n\neq 0$, let $U$, $V$ be small neighbourhoods of $[0,\nicefrac{1}{2}]$ and $[\nicefrac{1}{2},1]$, respectively, and $f_1$, $f_2$ a partition of unity subordinate to $U$, $V$. Set $\chi_i = \frac{f_i}{\sqrt{f_1^2 + f_2^2}}$; thus $\chi_1^2 + \chi_2^2 = 1$.

We define $\xi_1^n$ on $U\times S^1$ by setting $\xi_1^n(x,y) = \chi_1(x)$. Choosing $U$ and $V$ small enough $\xi_1$ and, $\xi_2$ may be assumed to vanish with all derivatives on the boundary of a fundamental domain, and using the equation \eqref{Eqn:Period}, $\xi_1^n$ can then be extended to an element of $\QHM ^{(n)}$ . 

A similar process can be applied to $\chi_2$ to obtain $\xi_2^n$. An easy computation on a well chosen fundamental domain yields
\begin{align*}
(\xi_i^n)^* \xi_i^n &= \overline{\chi_i} \chi_i
& 
 \xi_i^n (\xi_i^n)^* &= \chi_i \overline{\chi_i},
\end{align*}
and then $\chi_1^2 + \chi_2^2 = 1$ ensures that
\begin{align*}
(\xi_1^n)^*  \xi_1^n + (\xi_2^n)^*  \xi_2^n &= 1 
&
\xi_1^n (\xi_1^n)^* + \xi_2^n (\xi_2^n)^* &= 1.
\end{align*}
Finally, notice that if $\chi_i$ is smooth, then so is $\xi_i^n$.
\end{proof}

Recall that the \emph{Heisenberg group} $H_1$ is the subgroup of $GL_3(\R)$ of the matrices
\begin{align*} M(r,s,t):=
&\begin{pmatrix}
1 & s & t \\
0 & 1 & r \\
0 & 0 & 1
\end{pmatrix}
&
&\text{ where } r,s,t \in \R
\end{align*}
It has been proved (see \cite{RieffelDefQuant}, section 5) that the Heisenberg group acts on $\QHM $. We use the following expression for the action:
\begin{equation}
\label{Eqn:AlphQHM}
 \alpha_{(r,s,t)}(F)(x,y,p) = e\Big(-p\big(t + c s (x - r -p \mu)\big)\Big) F(x-r,y-s,p).
\end{equation}
The infinitesimal generators $\partial _i$, for $i =1,2,3$ are
\begin{align*}
	\partial_1(F)(x,y,p) =& -\derp[F]{x}(x,y,p) & \partial_3(F)(x,y,p) =& - i 2 \pi p F(x,y,p) \\
\end{align*}
$$ \partial_2(F)(x,y,p) = - \derp[F]{y}(x,y,p) - i 2 \pi c p (x- p \mu) F(x,y,p).$$

Note that $\alpha_{(0,0,t)}$ is just the gauge action. As $M(0,0,t)$ is in the center, the action of $H_1$ preserves the grading and $\partial_1$, $\partial_2$ commute with $\partial_3$. A short calculation shows:
\begin{align}
\label{Eqn:RelComm}
	[\partial_1, \partial_2] &= -c \partial_3 & [\partial_1, \partial_3] &= 0 & [\partial_2, \partial_3] &= 0
\end{align}

\begin{Def} We define the smooth Quantum Heisenberg Manifold $\mc{D}$ as the subalgebra of $H_1$-smooth elements of $D$.
\end{Def}
Note that $\mc{D}$ inherits the $\mN$-grading from $D$, and that $\mc{D}^1$ is actually a bimodule over $\mc{D}^0$.

We recall for the reader's convenience that the algebra of smooth elements for a Lie-group action on a $C^*$-algebra $A$ form a holomorphically closed Fréchet subalgebra $\mc{A}$ of $A$, equipped with the family of seminorms induced by the action of the universal enveloping algebra $U(\mathfrak{g})$ of the Lie algebra.
\begin{Prop}
The smooth Quantum Heisenberg Manifold is a Fréchet algebra and a smooth Pimsner algebra.
\end{Prop}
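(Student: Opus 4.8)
The plan is to verify the five defining conditions of a smooth Pimsner algebra for $\mc{D}$ in turn, exploiting throughout that $\mc{D}$ is the algebra of $H_1$-smooth vectors and that Lemma \ref{Lem:MorEqQHM} supplies explicit smooth frames in every degree. The Fréchet and structural conditions are essentially formal: by the fact recalled just above, the $H_1$-smooth vectors form a holomorphically closed Fréchet subalgebra of the $C^*$-QHM with topology given by the $U(\mathfrak{g})$-seminorms, and since the $H_1$-action is by $*$-automorphisms, $\mc{D}$ is self-adjoint. The unit of $\QHM$ sits in degree zero and is fixed by $H_1$, hence smooth, so $1\in\mc{D}$ and $\mc{A}=\mc{D}\cap A=\mc{D}^{(0)}$ is unital; being the degree-zero spectral subspace of a graded Fréchet algebra, it is closed in $\mc{D}$.

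Next I would treat gauge-smoothness and generation by $\mc{E}$. Since the gauge action is $\alpha_{(0,0,t)}$, a sub-action of $H_1$, the algebra $\mc{D}$ is gauge-invariant and its elements are gauge-smooth; as $\mc{D}$ is metrizable, the remark following Definition \ref{gaugesmooth} immediately yields the continuous homomorphism $\tau:\mc{D}\to\fn^\infty(\mathbb{T})\otimes_\pi\mc{D}$, i.e.\ gauge-smoothness. For generation by $\mc{E}$, gauge-smoothness provides, via $\tau$ and the rapid decay of Fourier coefficients of smooth functions on $\mathbb{T}$, a convergent decomposition $x=\sum_n x_n$ with $x_n\in\mc{D}^{(n)}$, so it suffices to approximate each $\mc{D}^{(n)}$. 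Using the degree-one frame $\{\xi_1^1,\xi_2^1\}$ and the identity $\sum_i S_{\xi_i^1}\bar S_{\xi_i^1}=\sum_i\xi_i^1(\xi_i^1)^*=1$, any $\eta\in\mc{D}^{(n)}$ satisfies $\eta=\sum_i S_{\xi_i^1}(\bar S_{\xi_i^1}\eta)$ with $\bar S_{\xi_i^1}\eta\in\mc{D}^{(n-1)}$; iterating down to degree zero writes $\eta$ as a finite sum of products of degree-one creation operators from $S(\mc{E})$ with a degree-zero element of $\mc{A}$, all smooth. Hence $\mc{D}$ is generated by $\mc{E}$.

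The crucial point is tameness, and this is where I expect the only real work. I would take as admissible sequence the degree-$n$ frames $\xi_i^n$, $i=1,2$, of Lemma \ref{Lem:MorEqQHM}, listed by increasing degree with two vectors per degree (so $l=2$); regarding $\xi_i^n\in E^{\otimes n}\cong\mc{D}^{(n)}$ as $\mc{E}$-multivectors of length $n$, admissibility is exactly $\sum_{i}S_{\xi_i^n}\bar S_{\xi_i^n}=\xi_1^n(\xi_1^n)^*+\xi_2^n(\xi_2^n)^*=1$. For the polynomial-growth requirement of Lemma \ref{approx} one must bound the $U(\mathfrak{g})$-seminorms of the degree-zero elements $S_{\xi_i^n}\bar S_{\xi_i^n}=\xi_i^n(\xi_i^n)^*=\chi_i\overline{\chi_i}$; the decisive observation is that the partition of unity $\chi_i$ in the construction of Lemma \ref{Lem:MorEqQHM} is chosen independently of $n$, so $\xi_i^n(\xi_i^n)^*=\chi_i^2$ does not depend on $n$ at all. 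Thus these seminorms are bounded in $i$, giving constant — a fortiori polynomial — growth, and $\mc{D}$ is tame. The existence of these frames also guarantees $1\otimes e\in\Ker(\pi_{\mc{D}})$. Finally, canonical splitting is direct: since $\rho(U)=S$ and $\rho(\bar U)=\bar S$, the map $\id_{\mc{D}}\otimes\rho$ carries the generators $S_\xi\otimes U$, $\bar S_\xi\otimes\bar U$, $a\otimes 1$ of $\Image(\pi_{\mc{D}})$ into the generators $S(\mc{E})\otimes S$, $\bar S(\mc{E})\otimes\bar S$, $\mc{A}\otimes 1$ of $\stoe_{\mc{D}}$, and by continuity restricts to a split of $\pi_{\mc{D}}$.

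The main obstacle is thus confined to tameness, the size restriction emphasized in the introduction; everything else is bookkeeping around the Lie-group smoothing and the frames. A subtlety worth double-checking inside the tameness step is that growth is measured on the degree-zero elements $S_{\xi_i}\bar S_{\xi_i}$, on which $\partial_3$ acts as zero (it acts by $-i2\pi n$ on $\mc{D}^{(n)}$), so that the only $U(\mathfrak{g})$-derivatives entering the estimate are those generated by $\partial_1,\partial_2$ applied to the $n$-independent function $\chi_i^2$.
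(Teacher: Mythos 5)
Your verification of unitality, gauge-smoothness, generation by $\mc{E}$ and the canonical splitting is fine and matches (indeed elaborates on) what the paper does. The gap is in the tameness step, which you correctly identify as the crux but then dispose of too cheaply. Tameness, as defined in the paper, asks for admissible sequences $(S_{\xi_i})_i$, $(\bar S_{\xi_i})_i$ \emph{themselves} of polynomial growth, i.e.\ $p(S_{\xi_i})$ polynomially bounded in $i$ for every continuous seminorm $p$ on $\mc{D}$ --- not merely the products $S_{\xi_i}\bar S_{\xi_i}$. The stronger condition is genuinely needed: Lemma \ref{approx} (the approximate-unit property, condition (iii) of Definition \ref{contextcond}) only uses the products, but condition (ii) of the Morita context in Theorem \ref{context} requires the matrix $(\bar\Xi_i x\Xi_j)_{ij}=(\bar S_{\xi_i}x_{|\xi_i|,|\xi_j|}S_{\xi_j}\otimes e)_{ij}$ to land in $\sco\otimes_\pi(\mc{A}\otimes e)$, and rapid decay of that matrix is obtained by multiplying the rapidly decreasing $q(x_{k,l})$ against $q(\bar S_{\xi_i})q(S_{\xi_j})$ --- which must therefore be polynomially bounded. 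Your ``decisive observation'' that $\xi_i^n(\xi_i^n)^*=\chi_i^2$ is $n$-independent, and that $\partial_3$ kills degree-zero elements, only covers the product half and silently skips the estimate that actually carries the content.

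Concretely, the frame elements $\xi_i^n$ live in degree $n$, where $\partial_3$ acts by multiplication by $-2\pi i n$ and $\partial_2$ contributes the multiplier $-2\pi i c\,n(x-n\mu)$, so the seminorms $p_{n_3,n_2,n_1}(\xi_i^n)=\|\partial_3^{n_3}\partial_2^{n_2}\partial_1^{n_1}\xi_i^n\|_\infty$ are \emph{not} bounded in $n$; the paper's proof consists precisely in showing, after a Poincar\'e--Birkhoff--Witt reduction to seminorms of this ordered form and trivializing over $U$ or $V$, the bound $p_{n_3,n_2,n_1}(\xi_i^n)\leq C(1+n)^{n_3+2n_2}$ with $C$ independent of $n$. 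This polynomial (rather than bounded) growth is exactly the ``size restriction'' the introduction alludes to, and it is the one step of the Proposition that is not bookkeeping. Your write-up would need to supply this estimate on the $\xi_i^n$ themselves to be complete.
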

\begin{proof}
Unitality is clear by definition, and $\mc{D}$ is Fréchet as noted above. $\mc{D}\cap D^0=\fn^\infty(\mT^2)$ with it's natural topology, and is thus closed. It is gauge smooth, because $\alpha_{(0,0,t)}$ is the gauge action itself (compare the remark after Definition \ref{gaugesmooth}). 

It remains to show that $\mc{D}$ is tame. By Lemma \ref{Lem:MorEqQHM}, there is a frame with two elements for every $\mc{D}^n$; denote $\xi_n$ the  sequence of frames ordered by degree. Then it clearly satisfies the conditions on the degree.  Because scaling and adding do not change the equivalence-class of a family of seminorms, we may apply  Poincaré-Birkoff-Witt (see \cite{MR499562}, 17.4) to see that it is enough to show that the seminorms of the form
$$p_{n_3,n_2,n_1}(d):=||\partial_3^{n_3}\partial_2^{n_2}\partial_1^{n_1}(d)||_{\infty}, \; \; d\in\mc{D}$$
yield polynomially increasing sequences on $\xi_n$. We show: There is a constant $C$ that does not depend on $n$ such that 
\begin{equation}\label{inequ}p_{n_3,n_2,n_1}(\xi_n)\leq C(1+n)^{n_3+2n_2}.\end{equation}
Trivializing over $U$ or $V$, we may assume that $\xi_n$ is $\chi_1$ or $\chi_2$ (compare Lemma \ref{Lem:MorEqQHM}) and calculate in the first case:
\begin{align*} ||p_{n_3,n_2,n_1}\xi_n||=||\partial_3^{n_3}\partial_2^{n_2}\partial_1^{n_1}\chi_1||\leq Cn^{n_3}||\partial_2^{n_2}\chi_1^{(n_1)}||\\\leq Cn^{n_3+n_2}||(x+|\mu|n)^{n_2}||
\end{align*}
where $C$ is a universal constant, from whence the above inequality follows in this case. The case of $\chi_2$ is treated similarly.
\end{proof}
Hence, applying Theorem \ref{generalth} to $kk$ and $HP$, we get
\begin{Th}\label{QHMDIAG} There is a commutative diagram
\[\scalebox{0.85}{\xymatrix{K_0(\fn^\infty(\mT^2))\ar[rr]\ar[dr]&&K_0(\fn^\infty(\mT^2))\ar[rr]\ar[d]&&K_0(\mc{D})\ar[ddd]\ar[dl]\\
&HP_0(\fn^\infty(\mT^2))\ar[r]&HP_0(\fn^\infty(\mT^2))\ar[r]&HP_0(\mc{D})\ar[d]\\
&HP_1(\mc{D})\ar[u]&HP_1(\fn^\infty(\mT^2))\ar[l]&HP_1(\fn^\infty(\mT^2))\ar[l]\\
K_0(\mc{D})\ar[uuu]\ar[ur]&&K_1(\fn^\infty(\mT^2))\ar[ll]\ar[u]&&K_1(\fn^\infty(\mT^2))\ar[ul]\ar[ll]\\
}}\]
\end{Th}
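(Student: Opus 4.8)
The plan is to obtain the diagram by applying Theorem \ref{generalth} to two functors at once and then gluing the two resulting six-term sequences along the Chern--Connes character. Most of the preparatory work is already done: the preceding Proposition shows that the smooth QHM $\mc{D}$ is a smooth Pimsner algebra (and an $m$-algebra, being Fr\'echet), while its proof identifies the base algebra as $\mc{A}=\mc{D}\cap D^0=\fn^\infty(\mT^2)$. So my first step is simply to record this identification of $\mc{A}$, which pins down the four distinct groups occurring in each hexagon of the diagram.

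For the second step I would apply Theorem \ref{generalth} to the functors $kk(B,-)$ and $HP(B,-)$ of \cite{MR1456322}, viewed as functors of the second variable for a fixed test algebra $B$. Both are half-exact, diffotopy invariant and $\sco$-stable, so each produces a six-term exact sequence for the extension $0\to\Ker(\pi_{\mc{D}})\to\stoe_{\mc{D}}\to\Image(\pi_{\mc{D}})\to 0$, in which Theorem \ref{context} replaces the kernel by $\mc{A}$, Theorem \ref{PMV} replaces $\stoe_{\mc{D}}$ by $\mc{A}$, and Proposition \ref{image} replaces the quotient by $\mc{D}$. Specialising the test algebra to $B=\mC$ converts these into the outer ($K_*$) and inner ($HP_*$) hexagons of the stated diagram.

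For the third step I would invoke the bivariant Chern--Connes character $\mathrm{ch}\colon kk\to HP$ of \cite{MR1456322} as the natural transformation implementing the slanted arrows. Naturality of $\mathrm{ch}$ with respect to the algebra homomorphisms that induce the horizontal maps -- the inclusions $\iota$ and $\iota_C$, the quasihomomorphism $(\alpha,\bar\alpha)$, and the evaluation of Proposition \ref{image} -- gives commutativity of all rectangular faces bounded by horizontal arrows. In fact the entire picture is nothing but the bivariant prism of the theorem in the previous section, specialised to $B=\mC$; one reads off its commuting faces directly.

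The main obstacle is the commutativity of the two faces that contain the connecting map $\partial$ of the six-term sequence. Unlike the horizontal maps, $\partial$ is not induced by a single homomorphism but is the boundary map of the long exact sequence of the extension above; to make its two faces commute one must know that $\mathrm{ch}$ intertwines the boundary maps of the two long exact sequences, and that this compatibility survives the three identifications (kernel, total algebra, quotient) used to fold the long sequence into its six-term form. This is precisely the content that makes $\mathrm{ch}$ a transformation of homology theories in \cite{MR1456322}; granting it, every face of the prism commutes and the diagram follows.
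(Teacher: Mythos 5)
Your proposal follows essentially the same route as the paper: the result is obtained by applying Theorem \ref{generalth} to $kk$ and $HP$ with the identification $\mc{A}=\fn^\infty(\mT^2)$, and the two hexagons are linked by the bivariant Chern--Connes character of \cite{MR1456322}, exactly as in the prism diagram of the preceding section. Your explicit attention to the compatibility of the character with the boundary maps is a point the paper leaves implicit, but it does not change the argument.
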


\bibliographystyle{alpha}
\bibliography{../../../Fullbib}

\begin{thebibliography}{CMR07}

\bibitem[AEE98]{AbadieEE}
B.~Abadie, S.~Eilers, and R.~Exel.
\newblock Morita equivalence for crossed products by {H}ilbert
  {$C^*$}-bimodules.
\newblock {\em Trans. Amer. Math. Soc.}, 350(8):3043--3054, 1998.

\bibitem[Bla98]{BlackK}
B.~Blackadar.
\newblock {\em {$K$}-theory for operator algebras}, volume~5 of {\em
  Mathematical Sciences Research Institute Publications}.
\newblock Cambridge University Press, Cambridge, second edition, 1998.

\bibitem[CMR07]{CuntzMeyer}
J.~Cuntz, R.~Meyer, and J.~M. Rosenberg.
\newblock {\em Topological and bivariant {$K$}-theory}, volume~36 of {\em
  Oberwolfach Seminars}.
\newblock Birkh\"auser Verlag, Basel, 2007.

\bibitem[CT06]{MR2207702}
J.~Cuntz and A.~Thom.
\newblock Algebraic {$K$}-theory and locally convex algebras.
\newblock {\em Math. Ann.}, 334(2):339--371, 2006.

\bibitem[Cun83]{MR733641}
J.~Cuntz.
\newblock Generalized homomorphisms between {$C\sp{\ast} $}-algebras and
  {$KK$}-theory.
\newblock In {\em Dynamics and processes ({B}ielefeld, 1981)}, volume 1031 of
  {\em Lecture Notes in Math.}, pages 31--45. Springer, Berlin, 1983.

\bibitem[Cun84]{MR750677}
J.~Cuntz.
\newblock {$K$}-theory and {$C^{\ast} $}-algebras.
\newblock In {\em Algebraic {$K$}-theory, number theory, geometry and analysis
  ({B}ielefeld, 1982)}, volume 1046 of {\em Lecture Notes in Math.}, pages
  55--79. Springer, Berlin, 1984.

\bibitem[Cun97]{MR1456322}
J.~Cuntz.
\newblock Bivariante {$K$}-{T}heorie f\"ur lokalkonvexe {A}lgebren und der
  {C}hern-{C}onnes-{C}harakter.
\newblock {\em Doc. Math.}, 2:139--182 (electronic), 1997.

\bibitem[Cun05]{MR2240217}
J.~Cuntz.
\newblock Bivariant {$K$}-theory and the {W}eyl algebra.
\newblock {\em $K$-Theory}, 35(1-2):93--137, 2005.

\bibitem[Gab]{OlPair}
O.~Gabriel.
\newblock $k$-theory, cyclic cohomology, and pairings on quantum heisenberg
  manifolds.
\newblock In preparation.

\bibitem[Hum78]{MR499562}
J.~E. Humphreys.
\newblock {\em Introduction to {L}ie algebras and representation theory},
  volume~9 of {\em Graduate Texts in Mathematics}.
\newblock Springer-Verlag, New York, 1978.
\newblock Second printing, revised.

\bibitem[Kat03]{Katsura02}
T.~Katsura.
\newblock A construction of {$C^*$}-algebras from {$C^*$}-correspondences.
\newblock In {\em Advances in quantum dynamics ({S}outh {H}adley, {MA}, 2002)},
  volume 335 of {\em Contemp. Math.}, pages 173--182. Amer. Math. Soc.,
  Providence, RI, 2003.

\bibitem[Pim97]{MR1426840}
M.~V. Pimsner.
\newblock A class of {$C^*$}-algebras generalizing both {C}untz-{K}rieger
  algebras and crossed products by {${\bf Z}$}.
\newblock In {\em Free probability theory ({W}aterloo, {ON}, 1995)}, volume~12
  of {\em Fields Inst. Commun.}, pages 189--212. Amer. Math. Soc., Providence,
  RI, 1997.

\bibitem[PV80]{MR587369}
M.~Pimsner and D.~Voiculescu.
\newblock Exact sequences for {$K$}-groups and {E}xt-groups of certain
  cross-product {$C^{\ast} $}-algebras.
\newblock {\em J. Operator Theory}, 4(1):93--118, 1980.

\bibitem[Rie89]{RieffelDefQuant}
M.~A. Rieffel.
\newblock Deformation quantization of heisenberg manifolds.
\newblock {\em Communications in Mathematical Physics}, 122(4):531--562, 1989.

\end{thebibliography}
\end{document}